\def\blfootnote{\gdef\@thefnmark{}\@footnotetext}
\theoremstyle{plain}
\newtheorem{thm}{Theorem}[section]
\newtheorem{prop}[thm]{Proposition}
\newtheorem{lem}[thm]{Lemma}
\newtheorem{cor}[thm]{Corollary}
\newtheorem{conj}[thm]{Conjecture}
\theoremstyle{definition}
\newtheorem{dfn}[thm]{Definition}
\newtheorem*{ack}{Acknowledgements}
\theoremstyle{remark}
\newtheorem{rmk}[thm]{Remark}
\numberwithin{equation}{section}
\title{Computation of categorical entropy via spherical functors}
\author{Jongmyeong Kim}
\address{Center for Geometry and Physics, Institute for Basic Science (IBS), Pohang 37673, Republic of Korea}
\email{myeong@ibs.re.kr}
\begin{document}

\begin{abstract}
We study the relationship between the categorical entropy of the twist and cotwist functors along a spherical functor.
In particular, we prove the categorical entropy of the twist functor coincides with that of the cotwist functor if the essential image of the right adjoint functor of the spherical functor contains a split-generator.
We also see our results generalize the computations of the categorical entropy of spherical twists and $\mathbb{P}$-twists by Ouchi and Fan.
As an application, we apply our results to the Gromov--Yomdin type conjecture by Kikuta--Takahashi.
\end{abstract}

\maketitle

\blfootnote{\textit{2020 Mathematics Subject Classification}. Primary 18G80; Secondary 14F08\\
\indent\textit{Key Words and Phrases}. Categorical entropy, Spherical functors, Spherical twists}

\section{Introduction}

\subsection{Motivation}

A {\em (topological) dynamical system} $(X,f)$ consists of a topological space $X$ and a continuous map $f : X \to X$.
One way to measure the complexity of a dynamical system $(X,f)$ is to investigate the asymptotic behavior of the iterations of the map $f$.
It gives rise to the notion of the {\em topological entropy} $h_\mathrm{top}(f) \in [0,\infty)$.

For a \textquote{nice enough} dynamical system, its topological entropy coincides with a linear algebraic quantity.
The following is known as the {\em Gromov--Yomdin theorem}.

\begin{thm}[\cite{Gro1},\cite{Gro2},\cite{Yom}]\label{thm1.1}
Let $X$ be a smooth projective variety over $\mathbb{C}$ and $f : X \to X$ be a surjective endomorphism.
Then
\begin{equation*}
h_\mathrm{top}(f) = \log\rho(f^*)
\end{equation*}
where $f^* : H^*(X,\mathbb{C}) \to H^*(X,\mathbb{C})$ is the induced automorphism on the cohomology and $\rho$ is the spectral radius, i.e., the largest absolute value of the eigenvalues.
\end{thm}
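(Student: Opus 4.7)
The plan is to prove the two inequalities $h_\mathrm{top}(f) \le \log\rho(f^*)$ (the upper bound, due to Gromov) and $h_\mathrm{top}(f) \ge \log\rho(f^*)$ (the lower bound, due to Yomdin) separately, since they require quite different techniques.

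For the upper bound, I would first replace the Bowen--Dinaburg definition of $h_\mathrm{top}(f)$ by the volume-growth formula
$$h_\mathrm{top}(f) = \lim_{n\to\infty} \tfrac{1}{n}\log \mathrm{vol}(\Gamma_{f^n}),$$
where $\Gamma_{f^n}\subset X\times X$ is the graph of $f^n$ equipped with the induced Riemannian metric; this reformulation, valid for any $C^\infty$ self-map of a compact manifold, shifts the problem from orbit counting to a Riemannian volume estimate. Fix a Kähler form $\omega$ on $X$. The Wirtinger identity expresses the volume of the $d$-dimensional complex submanifold $\Gamma_{f^n}$ as $\tfrac{1}{d!}\int_{\Gamma_{f^n}}(\pi_1^*\omega+\pi_2^*\omega)^d$, where $d=\dim X$. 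Expanding the binomial and pulling back along the isomorphism $\pi_1|_{\Gamma_{f^n}}\colon \Gamma_{f^n}\xrightarrow{\sim} X$ rewrites the integral as a sum of cohomological intersection numbers $\int_X \omega^{d-k}\wedge (f^n)^*\omega^k$. Each such number is bounded by a constant times the operator norm of $(f^*)^n$ on $H^{2k}(X,\mathbb{C})$, which grows like $\rho(f^*)^n$ up to a polynomial factor coming from Jordan blocks. Taking $n$-th roots and letting $n\to\infty$ yields the upper bound.

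The lower bound is the main obstacle. It rests on Yomdin's theorem, which asserts that for any $C^\infty$ self-map of a compact manifold one has $h_\mathrm{top}(f) \ge \log \lambda_k(f)$ for every $k$, where $\lambda_k(f)$ is the exponential growth rate of the Riemannian volume of the iterates of any $k$-dimensional smooth disk. The proof of this bound is delicate and relies on Yomdin's algebraic lemma, which controls how many $C^r$-bounded reparametrization patches are needed to straighten the image of a disk under $f^n$; this is where all the hard analytic and semi-algebraic work occurs.

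To complete the argument one has to show that for a holomorphic self-map of a projective manifold some $\log \lambda_k(f)$ actually equals $\log\rho(f^*)$. I would argue that the spectral radius of $f^*$ on $H^*(X,\mathbb{C})$ is attained on some Hodge piece $H^{k,k}$ preserved by $f^*$ (using that $f^*$ respects the Hodge decomposition) and that a class realizing this spectral radius can be represented by an effective algebraic cycle of complex dimension $d-k$. Its cohomological degree with respect to $\omega$ then grows like $\rho(f^*)^n$, and because we are in the Kähler setting Wirtinger again converts this cohomological growth into geometric volume growth of the pushforward cycles, giving $\lambda_{d-k}(f) \ge \rho(f^*)$. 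The passage from cohomology to honest Riemannian volumes, available only in the projective (Kähler) setting, is the conceptual heart of why entropy matches the spectral radius.
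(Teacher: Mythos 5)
Note first that the paper does not prove Theorem \ref{thm1.1} at all: it is quoted as a classical result of Gromov and Yomdin, so your sketch has to be measured against their arguments, whose broad strategy (Gromov's volume/Wirtinger upper bound, Yomdin's volume-growth lower bound) you correctly identify. Two of your steps, however, are wrong or missing as stated.

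For the upper bound, the claimed reformulation $h_\mathrm{top}(f)=\lim_n\frac1n\log\mathrm{vol}(\Gamma_{f^n})$ with $\Gamma_{f^n}\subset X\times X$ the graph of $f^n$ is not valid for arbitrary $C^\infty$ maps, and it is not what Gromov proves. Counting $(n,\epsilon)$-separated points requires control of all intermediate iterates, so the relevant object is the orbit graph $\Gamma_n=\{(x,f(x),\dots,f^n(x))\}\subset X^{n+1}$, not the graph of $f^n$ (which only records the endpoints of an orbit segment). Even with the orbit graph, the inequality $h_\mathrm{top}(f)\le\liminf_n\frac1n\log\mathrm{vol}(\Gamma_n)$ fails for general smooth maps: a strongly area-contracting horseshoe diffeomorphism has entropy $\log 2$ while the volumes of its (orbit) graphs stay bounded. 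The inequality holds here precisely because $f$ is holomorphic: $\Gamma_n$ is a complex submanifold of the K\"ahler manifold $X^{n+1}$, and the Lelong--Bishop monotonicity bound gives a uniform lower bound $c\,\epsilon^{2d}$ for the volume of $\Gamma_n$ inside every $\epsilon/2$-ball centered on it, which is what converts a separated-set count into a volume estimate. This K\"ahler input is the heart of Gromov's half and is absent from your sketch; the subsequent Wirtinger expansion and the bound of the intersection numbers by $\|(f^*)^n\|\lesssim n^m\rho(f^*)^n$ are fine.

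For the lower bound, invoking Yomdin's theorem is correct, but the bridge to $\rho(f^*)$ is asserted rather than proved. That $f^*$ respects the Hodge decomposition only shows the spectral radius is attained on some $H^{p,q}$; the statement you actually need, namely $\rho(f^*|_{H^*(X,\mathbb{C})})=\max_k\rho(f^*|_{H^{k,k}})$ (equivalently, the largest dynamical degree), is a genuine theorem, proved via estimates of the type $\rho(f^*|_{H^{p,q}})\le\sqrt{\lambda_p\lambda_q}$ together with log-concavity of the $\lambda_k$ (Khovanskii--Teissier, Dinh--Sibony), and it must appear in the argument. Likewise, your claim that a class realizing the spectral radius can be represented by an effective algebraic cycle is unjustified (leading eigenclasses are in general only nef or pseudoeffective) and also unnecessary: the standard route takes a general $k$-dimensional linear section $V\subset X$, observes that by Wirtinger the multiplicity-counted volume of $f^n(V)$ equals $\frac{1}{k!}\int_V(f^n)^*\omega^k$, i.e.\ the pairing of $(f^*)^n[\omega^k]$ with $[\omega^{d-k}]$, shows by a Perron--Frobenius argument in the pseudoeffective cone that this grows like $\rho(f^*|_{H^{k,k}})^n$, and then applies Yomdin to the disks parametrizing $V$. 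Without these two ingredients the proof does not close.
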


Similarly, let us consider a {\em categorical dynamical system} $(\mathcal{D},\Phi)$ which means a pair of a triangulated category $\mathcal{D}$ and an exact endofunctor $\Phi : \mathcal{D} \to \mathcal{D}$.
The {\em categorical entropy} $h_t(\Phi)$ (carrying a parameter $t \in \mathbb{R}$) of a categorical dynamical system $(\mathcal{D},\Phi)$ was introduced by Dimitrov--Haiden--Katzarkov--Kontsevich \cite{DHKK} as a categorical analogue of the topological entropy.

Thus, in view of the Gromov--Yomdin theorem, it is natural to expect an analogous formula to hold for the categorical entropy.

\begin{conj}[\cite{KT}, Conjecture 5.3]\label{conj1.2}
Let $X$ be a smooth projective variety over $\mathbb{C}$ and $\Phi : D^b(X) \to D^b(X)$ be an exact autoequivalence.
Then
\begin{equation*}
h_0(\Phi) = \log\rho([\Phi])
\end{equation*}
where $[\Phi] : \mathcal{N}(X) \otimes_\mathbb{Z} \mathbb{C} \to \mathcal{N}(X) \otimes_\mathbb{Z} \mathbb{C}$ is the induced automorphism on the numerical Grothendieck group (tensored with $\mathbb{C}$).
\end{conj}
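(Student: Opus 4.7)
The plan is to prove the two inequalities $h_0(\Phi) \geq \log\rho([\Phi])$ and $h_0(\Phi) \leq \log\rho([\Phi])$ separately. The first is the easier direction and should follow from the Dimitrov--Haiden--Katzarkov--Kontsevich formula: for any split-generator $G$ of $D^b(X)$,
\[
h_0(\Phi) = \limsup_{n\to\infty}\frac{1}{n}\log\sum_{k\in\mathbb{Z}}\dim_\mathbb{C}\mathrm{Hom}(G,\Phi^n G[k]).
\]
The sum dominates $|\chi(G,\Phi^n G)|$, which by Riemann--Roch equals the Euler pairing of $[G]$ with $[\Phi]^n[G]$ on $\mathcal{N}(X)_\mathbb{C}$. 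Since the Euler pairing is nondegenerate and $G$ being a split-generator forces the orbit of $[G]$ to span $\mathcal{N}(X)_\mathbb{C}$, a Jordan-form analysis of $[\Phi]$ then extracts the growth rate $\log\rho([\Phi])$ from the pairing and yields $h_0(\Phi) \geq \log\rho([\Phi])$.

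For the upper bound I would invoke Bridgeland stability. Fix a $\mathbb{C}$-linear stability condition $\sigma = (Z,\mathcal{P})$ on $D^b(X)$ whose central charge $Z$ factors through $\mathcal{N}(X)$, and use Ikeda's mass-growth formula
\[
h_0(\Phi) = \limsup_{n\to\infty}\frac{1}{n}\log m_\sigma(\Phi^n G).
\]
Decomposing $m_\sigma(\Phi^n G) = \sum_i |Z(A_i^{(n)})|$ along the Harder--Narasimhan filtration of $\Phi^n G$, I would try to bound this sum by $C\cdot n^d\cdot\rho([\Phi])^n$ for constants $C, d$ depending on $(G,\sigma)$. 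The natural mechanism is to produce a $\Phi$-adapted family $\{\sigma_n\}$ of stability conditions in which $\Phi^n G$ is semistable and show that the deformation cost between $\sigma$ and $\sigma_n$ is sub-exponential in $n$; then the mass collapses to $|Z([\Phi]^n[G])|$, whose growth rate is bounded directly by $\log\rho([\Phi])$ because $Z$ is a linear functional on $\mathcal{N}(X)_\mathbb{C}$.

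The main obstacle is controlling Harder--Narasimhan filtrations uniformly in $n$: in general, both the number of HN factors and the individual $|Z(A_i^{(n)})|$ can grow faster than what $[\Phi]$ detects on $\mathcal{N}(X)$, since HN factors can cancel numerically inside $\mathcal{N}(X)_\mathbb{C}$. Overcoming this would likely require one of the following inputs, each deep in its own right: (i) a structure theorem producing a $\Phi$-invariant or slowly varying stability condition, known only in very special geometries; (ii) a categorical analogue of Yomdin's volume-growth argument, transporting Lipschitz estimates on iterates of a smooth map into estimates on $\sum_k\dim\mathrm{Hom}(G,\Phi^n G[k])$; or (iii) a decomposition of $\Phi$ into building blocks --- spherical twists, line-bundle twists, and shifts --- paired with a product formula for categorical entropy, which would allow the spherical-functor techniques of the present paper, together with the twist/cotwist identity established here, to be bootstrapped to arbitrary autoequivalences. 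In my plan I would pursue route (iii) first, because it directly leverages the paper's machinery: combining the paper's equality $h_0(T) = h_0(C)$ with an iterated mapping-cone decomposition of $\Phi$ could, at least on varieties where $\mathrm{Auto}(D^b(X))$ is generated by spherical-type operations, reduce the conjecture to the already-handled spherical case plus a sub-exponential correction.
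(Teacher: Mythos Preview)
The statement you are attempting to prove is Conjecture~1.2, not a theorem; the paper does not prove it, and indeed explicitly records that counterexamples exist (see the references to \cite{Fan1}, \cite{Ouc}, \cite{Mat} immediately after the conjecture is stated). Consequently there is no ``paper's own proof'' to compare your proposal against, and any general argument for the upper bound $h_0(\Phi) \leq \log\rho([\Phi])$ must fail somewhere.

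Your lower bound is on solid ground: the inequality $h_0(\Phi) \geq \log\rho([\Phi])$ is precisely the Yomdin-type inequality recorded in the paper as Theorem~\ref{thm4.1} (due to \cite{KST}), valid for the perfect derived category of any smooth proper dg algebra, hence for $D^b(X)$. (Your sketch of it is a bit loose---a split-generator $G$ need not have $[\Phi]$-orbit spanning $\mathcal{N}(X)_\mathbb{C}$---but the conclusion is correct.) The gap is entirely in the upper bound. Each of your three proposed routes runs into the same wall: HN factors \emph{do} cancel numerically in the known counterexamples, so the mass $m_\sigma(\Phi^n G)$ genuinely grows faster than $\rho([\Phi])^n$; no $\Phi$-adapted stability family can exist in those cases; and any product/decomposition formula for entropy fine enough to handle all autoequivalences would contradict the counterexamples. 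The paper's contribution is not to prove the conjecture but to isolate a sufficient condition (Proposition~\ref{prop1.9} and Corollary~\ref{cor1.10}) under which the twist functor $T_S$ along a spherical functor $S$ satisfies it, by transferring the inequality from the cotwist $C_S$ via the identity $h_0(T_S) = h_0(C_S[2])$.
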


Kikuta--Takahashi \cite{KT} showed that the derived pullback $\Phi = \mathbb{L}f^*$ where $f : X \to X$ is a surjective endomorphism satisfies Conjecture \ref{conj1.2}.
It, in particular, implies that $h_0(\mathbb{L}f^*) = h_\mathrm{top}(f)$ by Theorem \ref{thm1.1}.
Besides that, Conjecture \ref{conj1.2} has been verified for a variety of cases such as smooth projective curves \cite{Kik}, smooth projective varieties with ample (anti)canonical bundles \cite{KT}, abelian surfaces \cite{Yos}, spherical twists \cite{Ouc} and $\mathbb{P}$-twists \cite{Fan2} while there are also counterexamples \cite{Fan1},\cite{Ouc},\cite{Mat}.
Therefore, it is important to characterize exact autoequivalences which do or do not satisfy Conjecture \ref{conj1.2}.

In this paper, we study the relationship between the categorical entropy of the twist and cotwist functors along a spherical functor whose notion was introduced by Anno--Logvinenko \cite{AL2} as a generalization of that of a spherical object introduced by Seidel--Thomas \cite{ST}.
In particular, we will generalize the following results by Ouchi \cite{Ouc} and Fan \cite{Fan2}.

\begin{thm}[\cite{Ouc}, Theorem 3.1]\label{thm1.3}
Let $\mathcal{D}$ be the perfect derived category of a smooth proper dg algebra and $E$ be a $d$-spherical object of $\mathcal{D}$.
Denote by $T^\mathbb{S}_E$ the spherical twist along $E$.
Then
\begin{equation*}
(1-d)t \leq h_t(T^\mathbb{S}_E) \leq
\begin{cases}
0 & (\text{if } t \geq 0),\\
(1-d)t & (\text{if } t \leq 0).
\end{cases}
\end{equation*}
If moreover $E^\perp = \{F \in \mathrm{Ob}(\mathcal{D}) \,|\, \mathrm{Hom}_\mathcal{D}^*(E,F) = 0\} \neq 0$ then $h_t(T^\mathbb{S}_E) = 0$ for all $t \geq 0$.
\end{thm}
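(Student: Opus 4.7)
My plan is to work directly from the definition
\[
h_t(T^\mathbb{S}_E) = \lim_{n \to \infty} \frac{1}{n} \log \delta_t\bigl(G, (T^\mathbb{S}_E)^n G\bigr)
\]
for a split generator $G$ of $\mathcal{D}$. Since adjoining $E$ to any split generator yields another split generator, I may take $G = E \oplus H$, so that $\delta_t(G, E) \leq 1$; all later estimates are carried out on this $G$.

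For the lower bound $h_t(T^\mathbb{S}_E) \geq (1-d)t$, I invoke the standard inequality
\[
h_t(\Phi) \geq \limsup_{n\to\infty} \frac{1}{n} \log \sum_{k \in \mathbb{Z}} \dim \mathrm{Hom}_\mathcal{D}^k(G, \Phi^n G) \, e^{-kt}.
\]
The defining triangle of the spherical twist, together with the spherical condition that $\mathrm{Hom}_\mathcal{D}^*(E, E)$ is one-dimensional in each of degrees $0$ and $d$, yields $T^\mathbb{S}_E(E) \simeq E[1-d]$, hence $(T^\mathbb{S}_E)^n(E) \simeq E[n(1-d)]$. Restricting the Hom sum to the $E$-summand of $G$ gives
\[
\sum_k \dim \mathrm{Hom}_\mathcal{D}^k\bigl(E, E[n(1-d)]\bigr) e^{-kt} = e^{n(1-d)t}\bigl(1 + e^{-dt}\bigr),
\]
whose logarithmic growth rate is exactly $(1-d)t$.

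The upper bound is the main work. I would prove by induction on $n$ the estimate
\[
\delta_t\bigl(G, (T^\mathbb{S}_E)^n G\bigr) \leq \delta_t(G, G) + e^t\, P_t(E, G) \sum_{k=0}^{n-1} e^{k(1-d)t},
\]
where $P_t(E, F) := \sum_k \dim \mathrm{Hom}_\mathcal{D}^k(E, F) e^{-kt}$. The inductive step rests on two consequences of the defining triangle. First, the triangle inequality for $\delta_t$ combined with $\delta_t(G, E) \leq 1$ yields
\[
\delta_t(G, T^\mathbb{S}_E F) \leq \delta_t(G, F) + e^t\, P_t(E, F).
\]
Second, applying $\mathrm{Hom}_\mathcal{D}^*(E, -)$ to the defining triangle and using that the composition map $\mathrm{Hom}^*(E, F) \otimes \mathrm{Hom}^*(E, E) \to \mathrm{Hom}^*(E, F)$ is degreewise surjective (because the identity of $E$ factors through it) produces the key Poincar\'e-series identity $P_t(E, T^\mathbb{S}_E F) = e^{(1-d)t} P_t(E, F)$. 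With these in hand, the geometric sum in the estimate is $O(n)$ when $(1-d)t \leq 0$ (so $h_t \leq 0$ for $t \geq 0$) and $O(e^{n(1-d)t})$ when $(1-d)t \geq 0$ (so $h_t \leq (1-d)t$ for $t \leq 0$). The main technical obstacle is the Poincar\'e-series identity itself: one must check that the long exact sequence collapses to a genuine isomorphism of shifted graded spaces, not just an inequality of dimensions.

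Finally, for the refinement when $E^\perp \neq 0$, I would pick any nonzero $F \in E^\perp$ and enlarge the split generator to $G' := G \oplus F$. Since $\mathrm{Hom}_\mathcal{D}^*(E, F) = 0$, the defining triangle immediately gives $T^\mathbb{S}_E(F) \simeq F$, so $F$ is a direct summand of $(T^\mathbb{S}_E)^n G'$ for every $n$. Consequently
\[
\sum_k \dim \mathrm{Hom}_\mathcal{D}^k\bigl(G', (T^\mathbb{S}_E)^n G'\bigr) e^{-kt} \geq \dim \mathrm{Hom}_\mathcal{D}^0(F, F) \geq 1,
\]
so the mass-type lower bound yields $h_t(T^\mathbb{S}_E) \geq 0$. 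Combined with the already-established upper bound $h_t \leq 0$ for $t \geq 0$, this forces $h_t(T^\mathbb{S}_E) = 0$ throughout $t \geq 0$.
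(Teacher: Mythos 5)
Your proposal is correct, but it takes a genuinely different route from this paper: it is essentially a reconstruction of Ouchi's original direct argument, whereas the paper recovers the statement as a special case of its general results on spherical functors. In the paper, $E$ is encoded as the spherical functor $S=-\otimes_{\mathbb{K}}E\colon D_{fd}(\mathbb{K})\to\mathcal{D}$ with $T_S\cong T^{\mathbb{S}}_E$ and $C_S\cong[-1-d]$, so $h_t(C_S[2])=(1-d)t$; the upper bound then comes from the recursion in the proof of Theorem \ref{thm1.6} (via Lemma \ref{lem3.1}, Corollary \ref{cor3.3}, Lemma \ref{lem3.4} and the intertwining $T_S^nS\cong S(C_S[2])^n$), the lower bound $(1-d)t$ from Lemma \ref{lem2.5} applied to $RT_S^n\cong(C_S[2])^nR$, and the $E^\perp\neq 0$ case from Theorem \ref{thm1.7}(2) using $T_SE'\cong E'$ for $E'\in\mathrm{Ker}\,SR$ --- all carried out purely with $\delta_t$. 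You instead use $T^{\mathbb{S}}_EE\cong E[1-d]$, the identity $P_t(E,T^{\mathbb{S}}_EF)=e^{(1-d)t}P_t(E,F)$, and the Ext-sum lower bound for entropy; all three steps are sound. (The Poincar\'e-series identity you flag as the technical obstacle does hold: surjectivity of the composition map already forces $\dim\mathrm{Hom}^k_{\mathcal{D}}(E,T^{\mathbb{S}}_EF)=\dim\mathrm{Hom}^{k+1-d}_{\mathcal{D}}(E,F)$ from the long exact sequence, or one can argue faster via $\mathrm{Hom}^*_{\mathcal{D}}(E,T^{\mathbb{S}}_EF)\cong\mathrm{Hom}^*_{\mathcal{D}}((T^{\mathbb{S}}_E)^{-1}E,F)$ with $(T^{\mathbb{S}}_E)^{-1}E\cong E[d-1]$.) Do note that the Ext-growth lower bound for $h_t$ is not proved in this paper and must be imported from \cite{DHKK} (or the finite-type version in \cite{KST}); it is legitimate here exactly because $\mathcal{D}$ is the perfect derived category of a smooth proper dg algebra. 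The trade-off is that your induction is concrete and short but tied to the spherical-object situation and to that imported inequality, while the paper's factorization through the twist--cotwist formalism uses only $\delta_t$ and a split-generator, which is why the corresponding proposition in Section \ref{sec5} is stated for any finite-type triangulated category admitting a split-generator and why the same method covers $\mathbb{P}$-twists and orthogonally spherical twists with no extra work.
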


\begin{rmk}
Before Ouchi's work, Ikeda proved the same formula for the spherical twists along the simple modules of the Ginzburg dg algebras associated with acyclic quivers \cite[Proposition 4.5]{Ike}.
\end{rmk}

\begin{thm}[\cite{Fan2}, Theorem 3.1]\label{thm1.5}
Let $\mathcal{D}$ be the perfect derived category of a smooth proper dg algebra and $E$ be a $\mathbb{P}^d$-object of $\mathcal{D}$.
Denote by $T^\mathbb{P}_E$ the $\mathbb{P}$-twist along $E$.
Then
\begin{equation*}
-2dt \leq h_t(T^\mathbb{P}_E) \leq
\begin{cases}
0 & (\text{if } t \geq 0),\\
-2dt & (\text{if } t \leq 0).
\end{cases}
\end{equation*}
If moreover $E^\perp = \{F \in \mathrm{Ob}(\mathcal{D}) \,|\, \mathrm{Hom}_\mathcal{D}^*(E,F) = 0\} \neq 0$ then $h_t(T^\mathbb{P}_E) = 0$ for all $t \geq 0$.
\end{thm}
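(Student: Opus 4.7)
The plan mirrors the strategy behind Theorem \ref{thm1.3}, adapted to the more intricate double-cone definition of the $\mathbb{P}$-twist: I would derive the lower bound from the distinguished shift exhibited by the $\mathbb{P}^d$-structure, and the upper bound from a complexity count along the iterated defining triangle.

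For the lower bound, fix a split-generator $G \in \mathcal{D}$ having $E$ as a direct summand; this is harmless since categorical entropy is independent of the choice of split-generator. The $\mathbb{P}^d$-condition gives $\mathrm{RHom}^*_\mathcal{D}(E,E) \cong H^*(\mathbb{P}^d, k)$ as graded algebras, from which a direct unpacking of the definition shows $T^\mathbb{P}_E(E) \simeq E[-2d]$. Iterating, $(T^\mathbb{P}_E)^n E \simeq E[-2dn]$, so the complexity satisfies
\begin{equation*}
\delta_t\!\left(G, (T^\mathbb{P}_E)^n G\right) \;\geq\; \delta_t\!\left(G, E[-2dn]\right) \;\geq\; C\, e^{-2dnt}
\end{equation*}
for some constant $C > 0$; taking $\log$ and dividing by $n$ yields $h_t(T^\mathbb{P}_E) \geq -2dt$.

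For the upper bound, I would iterate the defining triangle
\begin{equation*}
\mathrm{cone}\!\left(\mathrm{RHom}(E,F)[-2]\otimes_k E \longrightarrow \mathrm{RHom}(E,F)\otimes_k E\right) \longrightarrow F \longrightarrow T^\mathbb{P}_E(F)
\end{equation*}
starting from $F = G$, and track the graded Poincar\'e series $P_n(q) := \sum_k q^k \dim_k \mathrm{Hom}^k\!\bigl(E, (T^\mathbb{P}_E)^n G\bigr)$. Combined with the $\mathbb{P}^d$-relation $\mathrm{Hom}^*(E,E) \cong k[x]/(x^{d+1})$ with $\deg x = 2$, this yields a linear recursion whose transfer matrix, evaluated at $q = e^t$, has spectral radius $\max(1, e^{-2dt})$. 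Feeding this estimate back into the cone presentation expresses $(T^\mathbb{P}_E)^n G$ as an iterated extension of shifts $E[j]$ with controlled multiplicities, from which the upper bounds $0$ (for $t \geq 0$) and $-2dt$ (for $t \leq 0$) on $h_t(T^\mathbb{P}_E)$ follow.

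For the refined statement assuming $E^\perp \neq 0$, pick $0 \neq H \in E^\perp$ and enlarge the split-generator to contain $H$. Since $\mathrm{RHom}(E,H) = 0$ the $\mathbb{P}$-twist acts as the identity on the $H$-summand, so the only $n$-dependent contributions to $(T^\mathbb{P}_E)^n G$ are shifts $E[j]$ with $j \in [-2dn, 0]$ and polynomially growing multiplicities; for $t \geq 0$ each carries weight $e^{jt} \leq 1$, so $\delta_t$ grows only polynomially and $h_t(T^\mathbb{P}_E) \leq 0$. The main obstacle will be the bookkeeping for the upper bound: one must verify that the transfer matrix really has spectral radius $\max(1, e^{-2dt})$, with no spurious exponential growth introduced by the double-cone structure. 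This is the $\mathbb{P}^d$-analogue of the combinatorial estimate underlying Theorem \ref{thm1.3}, and its validity hinges on the polynomial (rather than exponential) growth of the powers of $H^*(\mathbb{P}^d, k)$.
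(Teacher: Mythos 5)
Your route is the direct one (essentially Fan's original argument), not the paper's reduction to the cotwist via the spherical functor $S=-\otimes_{\mathbb{K}[h]}E : D_{fd}(\mathbb{K}[h])\to\mathcal{D}$ with $C_S\cong[-2-2d]$ (Theorems \ref{thm1.6}, \ref{thm1.7}); that is fine in principle, but as written there are two genuine gaps. The first is exactly the point you flag as ``the main obstacle'': if you only push the defining triangle of $T^\mathbb{P}_E$ through long exact sequences for $\mathrm{Hom}^*_\mathcal{D}(E,-)$, you get nothing better than $\dim\mathrm{Hom}^*_\mathcal{D}(E,T^\mathbb{P}_EF)\leq(2d+3)\dim\mathrm{Hom}^*_\mathcal{D}(E,F)$, so the multiplicities of the copies of $E$ in the cone decomposition of $(T^\mathbb{P}_E)^nG$ could a priori grow exponentially in $n$, and no transfer matrix with spectral radius $\max(1,e^{-2dt})$ can be extracted at that level. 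The missing idea is to compute these Hom-spaces exactly using that $T^\mathbb{P}_E$ is an equivalence together with $T^\mathbb{P}_E(E)\cong E[-2d]$: one has $\mathrm{Hom}^*_\mathcal{D}(E,(T^\mathbb{P}_E)^nG)\cong\mathrm{Hom}^*_\mathcal{D}((T^\mathbb{P}_E)^{-n}E,G)\cong\mathrm{Hom}^*_\mathcal{D}(E[2dn],G)$, so the total dimension is constant in $n$ and only the shifts drift by $2d$ per iterate; then the $k$-th step contributes at most $Ce^{-2dkt}$ to $\delta_t(G,(T^\mathbb{P}_E)^nG)$ and summing over $k\leq n$ gives the stated upper bound. (Mind the sign convention: a component $E[-k]$, $k$ a degree of $\mathrm{Hom}^*(E,F)$, carries weight $e^{-kt}$, so your Poincar\'e series should be evaluated at $q=e^{-t}$.) This drift-of-shifts bookkeeping is what the paper packages via Lemma \ref{lem2.9}(1), Lemma \ref{lem3.1}, Corollary \ref{cor3.3} and Lemma \ref{lem3.4}. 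Also, in your lower bound the inequality $\delta_t(G,E[-2dn])\geq Ce^{-2dnt}$ is not formal: it is the Hom-counting lower bound for complexity, valid here only because $\mathcal{D}$ is the perfect derived category of a smooth proper dg algebra, and should be cited.

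The second gap is in the refined statement: assuming $E^\perp\neq 0$ and $t\geq 0$, what is left to prove is $h_t(T^\mathbb{P}_E)\geq 0$, since $h_t\leq 0$ already follows from the first part and your general lower bound $-2dt$ is negative there; your paragraph only re-derives $h_t\leq 0$, so it never reaches $h_t=0$. You have the right ingredient --- $\mathrm{Hom}^*_\mathcal{D}(E,H)=0$ forces $T^\mathbb{P}_EH\cong H$ --- but it must be used for the lower bound: with $G'=G\oplus H$ a split-generator,
\begin{equation*}
\delta_t\bigl(G',(T^\mathbb{P}_E)^nG'\bigr)\;\geq\;\delta_t\bigl(G',(T^\mathbb{P}_E)^nH\bigr)\;=\;\delta_t(G',H)\;>\;0,
\end{equation*}
hence $h_t(T^\mathbb{P}_E)\geq 0$ for all $t$; this is precisely the mechanism of Theorem \ref{thm1.7}(2) with $H\in\mathrm{Ker}\,SR$. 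With these two repairs your direct approach goes through and recovers the theorem by the same computation as Fan, while the paper obtains it as a special case of its general twist--cotwist comparison.
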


\subsection{Results}

The main observation is that, when we consider a $d$-spherical object (resp. $\mathbb{P}^d$-object) $E$ as a spherical functor $S$ in a standard way, its twist functor $T_S$ is isomorphic to the spherical (resp. $\mathbb{P}$-)twist along $E$ and its cotwist functor $C_S$ is isomorphic to the shift functor $[-1-d]$ (resp. $[-2-2d]$).
Since $h_t(C_S[2]) = (1-d)t$ (resp. $-2dt$), it is natural to expect that Theorems \ref{thm1.3} and \ref{thm1.5} still hold for the spherical twist along a general spherical functor after replacing $(1-d)t$ (resp. $-2dt$) in the theorems by $h_t(C_S[2])$.

It turns out this is the case under a certain mild assumption.

\begin{thm}[Upper bound]\label{thm1.6}
Let $S : \mathcal{C} \to \mathcal{D}$ be a spherical functor.
Assume that the essential image of its right adjoint functor $R : \mathcal{D} \to \mathcal{C}$ contains a split-generator of $\mathcal{C}$.\footnote{
In this case, the essential image of $R$ is sometimes said to be {\em dense} in $\mathcal{C}$, i.e., the smallest full triangulated subcategory containing the essential image of $R$ and closed under taking direct summand coincides with $\mathcal{C}$.
Also note that this condition is equivalent to the condition that the essential image of the left adjoint functor $L : \mathcal{D} \to \mathcal{C}$ of $S$ contains a split-generator of $\mathcal{C}$ by Definition \ref{dfn2.6} (4).
}
Then
\begin{equation*}
h_t(T_S) \leq
\begin{cases}
0 & (\text{if } h_t(C_S[2]) \leq 0),\\
h_t(C_S[2]) & (\text{if } h_t(C_S[2]) \geq 0).
\end{cases}
\end{equation*}
\end{thm}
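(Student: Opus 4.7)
The strategy is to iterate the defining triangle $SR \to \mathrm{id}_\mathcal{D} \to T_S$ and use a key functorial identity to convert each layer into an object involving $C_S$, then read off the exponential growth rate. The crucial identity is $T_S \circ S \simeq S \circ C_S[2]$: applying $S$ on the left to the defining triangle $C_S \to \mathrm{id}_\mathcal{C} \xrightarrow{\eta} RS \to C_S[1]$ yields $SC_S \to S \xrightarrow{S\eta} SRS \to SC_S[1]$, and the adjunction triangle identity $(\epsilon S) \circ (S\eta) = \mathrm{id}_S$ shows $S\eta$ is a split monomorphism. Hence this triangle splits as $SRS \simeq S \oplus SC_S[1]$ with $\epsilon S$ the projection, so $T_S S = \mathrm{cone}(\epsilon S) \simeq SC_S[2]$. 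By induction $T_S^k \circ S \simeq S \circ C_S^k[2k]$ for all $k \geq 0$.

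Applying $T_S$ repeatedly to the triangle $SRX \to X \to T_S X$ produces, for each $k \geq 1$, a triangle $SC_S^{k-1}[2(k-1)]RX \to T_S^{k-1}X \to T_S^k X$, exhibiting $T_S^n X$ as an iterated extension of $X,\, SRX,\, SC_S[2]RX,\, \ldots,\, SC_S^{n-1}[2(n-1)]RX$. For any split-generator $G$ of $\mathcal{D}$ this yields
\[
\delta_t(G, T_S^n X) \leq \delta_t(G, X) + \sum_{k=0}^{n-1} e^{2kt}\, \delta_t(G, SC_S^k RX).
\]

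Now I invoke the density hypothesis to pick a split-generator $G'$ of $\mathcal{C}$ lying in the essential image of $R$ and choose $G$ to contain $SG'$ as a direct summand. Since $S$ and $C_S^k$ are triangulated functors, bridging through $SG'$ gives
\[
\delta_t(G, SC_S^k RG) \leq \delta_t(G, SG') \cdot \delta_t(G', C_S^k G') \cdot \delta_t(G', RG),
\]
and the factors $\delta_t(G, SG')$, $\delta_t(G', RG)$ are finite because $G$ and $G'$ split-generate. Setting $X = G$ yields
\[
\delta_t(G, T_S^n G) \leq C_0 + C_1 \sum_{k=0}^{n-1} e^{2kt}\, \delta_t(G', C_S^k G').
\]

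By definition $h_t(C_S[2]) = \lim_k \frac{1}{k} \log\bigl(e^{2kt}\, \delta_t(G', C_S^k G')\bigr)$, so the sum has growth rate $0$ when $h_t(C_S[2]) \leq 0$ (subexponentially bounded terms, polynomial prefactor) and growth rate $h_t(C_S[2])$ when $h_t(C_S[2]) \geq 0$. Taking $\frac{1}{n}\log$ and letting $n \to \infty$ delivers the desired bound $h_t(T_S) \leq \max(0,\, h_t(C_S[2]))$. The main technical obstacle is the identity $T_S S \simeq SC_S[2]$, which must be carefully extracted from the spherical functor's adjunction data via the triangle identities; the density hypothesis enters in the third step, where the choice of a split-generator of $\mathcal{C}$ compatible with $R$ is what lets the correction terms be bootstrapped uniformly into the entropy of $C_S[2]$.
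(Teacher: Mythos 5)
Your proof is correct, and its skeleton matches the paper's: the identity $T_SS\cong SC_S[2]$ (the paper's Lemma 2.9(1), proved there via the octahedral axiom; your splitting argument via the triangle identity gives the same objectwise isomorphism, which is all the estimate needs), iteration of the defining triangle of $T_S$ to decompose $T_S^nG$, subadditivity of $\delta_t(G,-)$ over exact triangles (the paper's Lemma 3.1 --- you assert it without proof; note it does need a short argument because of the summand $F\oplus F'$ in the definition of $\delta_t$), and a growth estimate for $1+\sum_k a_k$ (the paper's Lemma 3.4, which your direct $\varepsilon$-argument replaces). The genuine difference is how you bound the layer terms $\delta_t(G,S(C_S[2])^{k}RG[1])$: the paper uses its Corollary 3.3, built from the triangle $T_SG'[-1]\to SRG'\to G'$, which reduces everything to $\delta_t(RG',(C_S[2])^{k}RG[1])$ and therefore forces $RG,RG'$ to be split-generators of $\mathcal{C}$ --- that is exactly where the density hypothesis enters in the paper. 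You instead bridge through a split-generator $G'$ of $\mathcal{C}$ using $\delta_t(E,E'')\le\delta_t(E,E')\delta_t(E',E'')$ and $\delta_t(\Phi E,\Phi E')\le\delta_t(E,E')$, paying only the constants $\delta_t(G,SG')$ and $\delta_t(G',RG)$, which are finite simply because $G$ and $G'$ are split-generators of $\mathcal{D}$ and $\mathcal{C}$ respectively. Consequently, and contrary to your closing remark, your argument never actually uses the density hypothesis: neither the requirement that $G'$ lie in the essential image of $R$ nor that $SG'$ be a summand of $G$ plays any role; all you need is that $\mathcal{C}$ has some split-generator, which is required anyway for $h_t(C_S[2])$ to be defined (and is what density guarantees). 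So your route is more elementary at the key step and in fact yields the upper bound without the density assumption, which the paper's method cannot avoid; density remains essential only for the lower bound of Theorem 1.7(1), via Lemma 2.5. The only blemishes are minor: your layer term should carry an extra shift $[1]$ (a harmless factor $e^{t}$), and the Lemma 3.1-type subadditivity deserves a proof or a citation.
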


\begin{thm}[Lower bound]\label{thm1.7}
Let $S : \mathcal{C} \to \mathcal{D}$ be a spherical functor with right adjoint functor $R$.
\begin{itemize}
\item[(1)] Assume that the essential image of $R : \mathcal{D} \to \mathcal{C}$ contains a split-generator of $\mathcal{C}$.
Then
\begin{equation*}
h_t(T_S) \geq h_t(C_S[2]).
\end{equation*}
\item[(2)] Assume that $\mathrm{Ker}\, SR \neq 0$.\footnote{
This is equivalent to the condition that $\mathrm{Ker}\, SL \neq 0$ by Definition \ref{dfn2.6} (3).
}
Then
\begin{equation*}
h_t(T_S) \geq 0.
\end{equation*}
\end{itemize}
\end{thm}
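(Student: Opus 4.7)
The strategy is to reduce both parts to the standard estimate that, for any exact endofunctor $\Phi$ of a triangulated category $\mathcal{T}$ admitting a split-generator and any nonzero $X\in\mathcal{T}$,
\begin{equation*}
h_t(\Phi)\;\geq\;\limsup_{n\to\infty}\frac{1}{n}\log\delta_t(X,\Phi^n X),
\end{equation*}
which follows from the sub-multiplicativity $\delta_t(A,C)\leq\delta_t(A,B)\,\delta_t(B,C)$ combined with the functoriality $\delta_t(\Phi A,\Phi B)\leq\delta_t(A,B)$: inserting a split-generator $G'$ between $X$ and $\Phi^n X$ bounds $\delta_t(X,\Phi^n X)$ by $\delta_t(G',\Phi^n G')$ up to a multiplicative constant depending only on $X$ and $G'$.

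For part~(1), the key input is the intertwining isomorphism $T_S\circ S\cong S\circ C_S[2]$ from the Anno--Logvinenko formalism. Since $T_S$ and $C_S$ are autoequivalences, passing to right adjoints and rearranging yields the dual relation $R\circ T_S\cong C_S[2]\circ R$, hence $R\circ T_S^n\cong C_S[2]^n\circ R$ for all $n\geq 0$. By hypothesis, fix a split-generator $G$ of $\mathcal{C}$ with $G\cong RF_0$ for some $F_0\in\mathcal{D}$, and also a split-generator $G_\mathcal{D}$ of $\mathcal{D}$. Then
\begin{equation*}
C_S[2]^n G\;\cong\;R T_S^n F_0,
\end{equation*}
and by applying $R$ to presentations in $\mathcal{D}$ together with sub-multiplicativity one obtains
\begin{equation*}
\delta_t(G,C_S[2]^n G)\;\leq\;\delta_t(G,RG_\mathcal{D})\cdot\delta_t(G_\mathcal{D},T_S^n G_\mathcal{D})\cdot\delta_t(G_\mathcal{D},F_0),
\end{equation*}
with the first and third factors constant in $n$. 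Taking $\frac{1}{n}\log$ and passing to the limit delivers $h_t(C_S[2])\leq h_t(T_S)$.

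For part~(2), pick any nonzero $F\in\mathrm{Ker}\,SR$. The defining triangle $SRF\to F\to T_SF\to SRF[1]$ collapses to $0\to F\to T_SF\to 0$, forcing $T_SF\cong F$, and iteratively $T_S^n F\cong F$ for all $n$. Hence $\delta_t(F,T_S^n F)=\delta_t(F,F)$ is a positive constant independent of $n$, so $\limsup_n\frac{1}{n}\log\delta_t(F,T_S^n F)=0$, and the general inequality gives $h_t(T_S)\geq 0$.

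The only real obstacle is pinning down the intertwining identity $RT_S\cong C_S[2]R$ with the correct shifts, which is a standard but shift-sensitive computation in the Anno--Logvinenko framework and presumably already recorded as a preliminary lemma; the remainder is direct bookkeeping with the complexity function $\delta_t$.
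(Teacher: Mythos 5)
Your part (1) is correct and is essentially the paper's own route: the intertwining $RT_S^n\cong (C_S[2])^nR$ (Lemma \ref{lem2.9}(2), which as you say follows from $T_SS\cong SC_S[2]$ by passing to right adjoints, or directly from Definition \ref{dfn2.6}(3),(4)) plus functoriality and submultiplicativity of $\delta_t$; the paper packages this as Lemma \ref{lem2.5} by replacing your pair $(F_0,G_{\mathcal{D}})$ with the single split-generator $G_{\mathcal{D}}\oplus F_0$ of $\mathcal{D}$, which removes the auxiliary constants but is the same estimate.

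Part (2), however, rests on a false lemma. The ``standard estimate'' $h_t(\Phi)\geq\limsup_n\frac{1}{n}\log\delta_t(X,\Phi^nX)$ for an \emph{arbitrary} nonzero $X$ is not true, and your justification breaks exactly where you insert a split-generator $G'$: that insertion needs the factor $\delta_t(X,G')$, which is finite only if $G'$ lies in the thick subcategory generated by $X$, i.e.\ essentially only if $X$ is itself a split-generator; the complexity function is not symmetric, and only its second argument may be arbitrary. Concretely, for $\mathcal{D}=D^b(\mathbb{P}^1)$, $\Phi=-\otimes\mathcal{O}(1)$ and $X=\mathcal{O}$, the object $\Phi^nX=\mathcal{O}(n)$ does not lie in the thick subcategory generated by $\mathcal{O}$, so $\delta_t(X,\Phi^nX)=\infty$ for all $n\geq 1$, while $h_t(\Phi)$ is never $+\infty$. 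The repair is the paper's move: keep the split-generator in the \emph{first} slot. From $\delta_t(G,\Phi^nX)\leq\delta_t(G,\Phi^nG)\,\delta_t(\Phi^nG,\Phi^nX)\leq\delta_t(G,\Phi^nG)\,\delta_t(G,X)$ one gets $h_t(\Phi)\geq\limsup_n\frac{1}{n}\log\delta_t(G,\Phi^nX)$ for any $X$; applied to $X=F\in\mathrm{Ker}\,SR$, where your (correct) observation $T_S^nF\cong F$ gives $\delta_t(G,T_S^nF)=\delta_t(G,F)$, a constant in $(0,\infty)$, this yields $h_t(T_S)\geq 0$. The paper phrases the same step by noting that $G\oplus F$ is again a split-generator and using the two-generator formula $h_t(\Phi)=\lim_n\frac{1}{n}\log\delta_t(G,\Phi^n(G\oplus F))$ of Remark 2.4 before dropping to the summand $T_S^nF\cong F$. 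So the key identification in your part (2) is right, but the passage from it to the entropy bound must be rerouted as above.
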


Combining Theorems \ref{thm1.6} and \ref{thm1.7}, we immediately obtain the following corollary.

\begin{cor}
Let $S : \mathcal{C} \to \mathcal{D}$ be a spherical functor with right adjoint functor $R$.
Assume that the essential image of $R : \mathcal{D} \to \mathcal{C}$ contains a split-generator of $\mathcal{C}$.
Then
\begin{equation*}
h_t(T_S) = h_t(C_S[2])
\end{equation*}
for all $t \in \mathbb{R}$ such that $h_t(C_S[2]) \geq 0$.
In particular, we have
\begin{equation*}
h_0(T_S) = h_0(C_S[2]).
\end{equation*}
\end{cor}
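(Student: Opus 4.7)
The corollary is a straightforward combination of the two preceding theorems, so my plan has essentially two lines plus a sanity check at $t=0$.

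First I would invoke Theorem \ref{thm1.7}(1) to get the unconditional lower bound
\begin{equation*}
h_t(T_S) \geq h_t(C_S[2]) \qquad \text{for every } t \in \mathbb{R},
\end{equation*}
since the hypothesis (the essential image of $R$ contains a split-generator of $\mathcal{C}$) is exactly that of the corollary. Then I would apply Theorem \ref{thm1.6} to obtain the matching upper bound on the locus $\{t \in \mathbb{R} : h_t(C_S[2]) \geq 0\}$: on that locus the piecewise formula of Theorem \ref{thm1.6} reduces to its second branch $h_t(T_S) \leq h_t(C_S[2])$. Combining the two inequalities yields the desired equality $h_t(T_S) = h_t(C_S[2])$ throughout that locus, which is the first assertion.

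For the \emph{in particular} clause I would check that $t=0$ does lie in this locus, i.e.\ that $h_0(C_S[2]) \geq 0$. Since $S$ is spherical, $C_S$, and hence $C_S[2]$, is an autoequivalence of $\mathcal{C}$; thus for any (nonzero) split-generator $G$ of $\mathcal{C}$ the object $(C_S[2])^n G$ is nonzero for all $n \geq 0$, so from the definition the Dimitrov--Haiden--Katzarkov--Kontsevich complexity satisfies $\delta_0(G,(C_S[2])^n G) \geq 1$ for all $n$. Taking $\frac{1}{n}\log$ and passing to the limit gives $h_0(C_S[2]) \geq 0$, and the general equality then specialises to $h_0(T_S) = h_0(C_S[2])$.

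There is no genuine obstacle here: the entire substance of the statement has been absorbed into Theorems \ref{thm1.6} and \ref{thm1.7}(1), and the only thing to verify by hand is the trivial non-negativity of $h_0$ on an autoequivalence (together with the implicit non-triviality of $\mathcal{C}$, which is already built into the split-generator hypothesis). The proof should therefore occupy no more than a few lines.
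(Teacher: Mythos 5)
Your proposal is correct and matches the paper, which simply states that the corollary follows immediately by combining Theorems \ref{thm1.6} and \ref{thm1.7}(1) in exactly the way you describe. Your extra verification that $h_0(C_S[2]) \geq 0$ (via $\delta_0(G,(C_S[2])^nG) \geq 1$ for an autoequivalence) is the standard fact the paper leaves implicit for the \emph{in particular} clause, so there is no real divergence.
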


The above results give a way to compute (or estimate) the categorical entropy of the twist functor $T_S$ using that of the cotwist functor $C_S$ and vice versa.
These are particularly useful when the categorical entropy of one of the twist and cotwist functors is easy to compute while the other is not.
Since every exact autoequivalences can be written as the twist functor along a spherical functor (in various ways) \cite{Seg}, these results may provide an available tool to compute the categorical entropy.

As a simple application of the above results, we give a sufficient condition for a twist functor to satisfy Conjecture \ref{conj1.2}.
Firstly, the following proposition says that the {\em Gromov type inequality} for the cotwist functor can be transferred to that for the twist functor under a certain assumption.

\begin{prop}\label{prop1.9}
Let $\mathcal{C},\mathcal{D}$ be triangulated categories of finite type with finite rank numerical Grothendieck groups and $S : \mathcal{C} \to \mathcal{D}$ be a spherical functor.
Assume that the essential image of its right adjoint functor $R : \mathcal{D} \to \mathcal{C}$ contains a split-generator of $\mathcal{C}$.
Assume moreover that
\begin{equation*}
h_0(C_S) \leq \log\rho([C_S])
\end{equation*}
and that there exists an element $v \in \mathcal{N}(\mathcal{C}) \otimes_\mathbb{Z} \mathbb{C}$ such that $[S]v \neq 0$ and $[C_S]v = \lambda v$ where $|\lambda| = \rho([C_S])$.
Then
\begin{equation*}
h_0(T_S) \leq \log\rho([T_S]).
\end{equation*}
\end{prop}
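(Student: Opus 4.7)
The plan is to combine the upper bound of Theorem~\ref{thm1.6} at $t=0$ with a lower bound on $\rho([T_S])$ coming from an eigenvector transferred across the spherical functor. Evaluating Theorem~\ref{thm1.6} at $t=0$, and using that shifts do not affect categorical entropy at $t=0$ (so $h_0(C_S[2]) = h_0(C_S)$), I get $h_0(T_S) \leq \max(0, h_0(C_S))$. Combined with the hypothesis $h_0(C_S) \leq \log\rho([C_S])$, this yields $h_0(T_S) \leq \max(0, \log\rho([C_S]))$, so the task reduces to establishing $\rho([T_S]) \geq \rho([C_S])$ and $\rho([T_S]) \geq 1$.

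For the first, I would pass to the numerical Grothendieck groups. The defining distinguished triangles $SR \to \mathrm{Id}_\mathcal{D} \to T_S$ and $C_S \to \mathrm{Id}_\mathcal{C} \to RS$ of the twist and cotwist yield the K-theoretic identities $[T_S] = [\mathrm{Id}_\mathcal{D}] - [S][R]$ and $[C_S] = [\mathrm{Id}_\mathcal{C}] - [R][S]$. Multiplying the first by $[S]$ on the right and the second by $[S]$ on the left, both expressions become $[S] - [S][R][S]$, giving the intertwining
\begin{equation*}
[T_S] \circ [S] \;=\; [S] \circ [C_S] \;:\; \mathcal{N}(\mathcal{C}) \otimes_\mathbb{Z} \mathbb{C} \longrightarrow \mathcal{N}(\mathcal{D}) \otimes_\mathbb{Z} \mathbb{C}.
\end{equation*}
Applying this to $v$ gives $[T_S]([S]v) = \lambda ([S]v)$, and since $[S]v \neq 0$ by hypothesis, $\lambda$ is an eigenvalue of $[T_S]$. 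Hence $\rho([T_S]) \geq |\lambda| = \rho([C_S])$.

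For the inequality $\rho([T_S]) \geq 1$, I use that for any spherical functor $S$, the twist $T_S$ is an autoequivalence, so $[T_S]$ is an automorphism of the finitely generated free abelian group $\mathcal{N}(\mathcal{D})/\mathrm{torsion}$. Its determinant is therefore $\pm 1$, which forces $\rho([T_S]) \geq 1$ and so $\log\rho([T_S]) \geq 0$. Putting the pieces together: if $h_0(C_S) \geq 0$, then $h_0(T_S) \leq h_0(C_S) \leq \log\rho([C_S]) \leq \log\rho([T_S])$; if instead $h_0(C_S) < 0$, then $h_0(T_S) \leq 0 \leq \log\rho([T_S])$. Either way $h_0(T_S) \leq \log\rho([T_S])$.

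The main obstacle I anticipate is merely bookkeeping the sign conventions for the defining triangles of $T_S$ and $C_S$, but the essential content, the intertwining $[T_S] \circ [S] = [S] \circ [C_S]$, is insensitive to the orientation chosen for those triangles; everything else is a direct combination of Theorem~\ref{thm1.6}, the hypothesis on the eigenvector $v$, and the lattice-theoretic remark about determinants of autoequivalences.
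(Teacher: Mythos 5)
Your proposal is correct and takes essentially the same route as the paper: Theorem~\ref{thm1.6} at $t=0$ together with the intertwining $[T_S][S]=[S][C_S]$ (the K-theoretic shadow of Lemma~\ref{lem2.9}~(1), $T_SS\cong SC_S[2]$) to transfer the maximal eigenvector $v$ and conclude $\rho([T_S])\geq\rho([C_S])$. The only divergence is your extra determinant argument for $\rho([T_S])\geq 1$, which the paper does not need since $h_0(C_S)\geq 0$ holds automatically ($\delta_0(G,C_S^nG)\geq 1$ because $C_S$ is an autoequivalence), so the hypothesis $h_0(C_S)\leq\log\rho([C_S])$ already forces $\log\rho([C_S])\geq 0$ and collapses the $\max$; your lattice step is fine as long as the finite-rank hypothesis is read (as intended) as saying $\mathcal{N}(\mathcal{D})$ is a finitely generated free abelian group, but it can simply be dropped.
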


The opposite inequality, so-called the {\em Yomdin type inequality}, is known to hold for the perfect derived category of a smooth proper dg algebra \cite[Theorem 2.13]{KST} (see Theorem \ref{thm4.1}).
Combining this fact and Proposition \ref{prop1.9}, we obtain the following corollary.

\begin{cor}\label{cor1.10}
Let $\mathcal{C}$ be a triangulated category of finite type with finite rank numerical Grothendieck group, $\mathcal{D}$ be the perfect derived category of a smooth proper dg algebra and $S : \mathcal{C} \to \mathcal{D}$ be a spherical functor.
Assume that the essential image of its right adjoint functor $R : \mathcal{D} \to \mathcal{C}$ contains a split-generator of $\mathcal{C}$.
Assume moreover that
\begin{equation*}
h_0(C_S) \leq \log\rho([C_S])
\end{equation*}
and that there exists an element $v \in \mathcal{N}(\mathcal{C}) \otimes_\mathbb{Z} \mathbb{C}$ such that $[S]v \neq 0$ and $[C_S]v = \lambda v$ where $|\lambda| = \rho([C_S])$.
Then
\begin{equation*}
h_0(T_S) = \log\rho([T_S]).
\end{equation*}
\end{cor}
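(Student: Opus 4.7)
The plan is to assemble Corollary \ref{cor1.10} from two ingredients already present in the paper: Proposition \ref{prop1.9}, which supplies the Gromov type inequality $h_0(T_S)\leq\log\rho([T_S])$, and Theorem \ref{thm4.1} (the Yomdin type inequality of Kikuta--Shiraishi--Takahashi), which supplies the opposite inequality $h_0(T_S)\geq\log\rho([T_S])$. The desired equality then follows by squeezing.

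For the upper bound I will invoke Proposition \ref{prop1.9} directly. All of its hypotheses on $\mathcal{C}$, on $S$, on the essential image of $R$, on $h_0(C_S)$ and on the existence of the eigenvector $v$ are lifted verbatim from the statement of the corollary, so the only thing left to verify is that $\mathcal{D}$ itself is a triangulated category of finite type with finite rank numerical Grothendieck group. This is standard for the perfect derived category of a smooth proper dg algebra: properness gives the finite type condition, and smoothness together with properness forces the numerical Grothendieck group to be finitely generated.

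For the lower bound I observe that $T_S$ is an exact endofunctor of $\mathcal{D}$, and $\mathcal{D}$ is by hypothesis the perfect derived category of a smooth proper dg algebra. This places us precisely in the setting of Theorem \ref{thm4.1}, applied to the endofunctor $T_S$, which immediately yields $h_0(T_S)\geq\log\rho([T_S])$. Combining this with the upper bound from the previous paragraph completes the argument.

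The main and essentially only obstacle is to confirm that the perfect derived category of a smooth proper dg algebra satisfies the finiteness hypotheses demanded by Proposition \ref{prop1.9}; beyond this routine check the corollary is a two-line consequence of the preceding material, and I do not anticipate needing to revisit the proofs of either Proposition \ref{prop1.9} or Theorem \ref{thm4.1}.
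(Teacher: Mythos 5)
Your proof is correct and follows essentially the same route as the paper, which likewise deduces the corollary directly from Proposition \ref{prop1.9} (upper bound) and Theorem \ref{thm4.1} applied to $T_S$ (lower bound). The only points needing verification are exactly the routine ones you identify --- that the perfect derived category of a smooth proper dg algebra is of finite type with finite rank numerical Grothendieck group, plus the trivial fact that $T_S$, being an autoequivalence, admits a right adjoint so Theorem \ref{thm4.1} applies.
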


\subsection{Structure of the paper}

In Section \ref{sec2}, we briefly review the basic definitions and properties of categorical entropy and spherical functors.

Then, in Section \ref{sec3}, we prove Theorems \ref{thm1.6} and \ref{thm1.7} by generalizing the proofs of \cite[Theorem 3.1]{Ouc} (see Theorem \ref{thm1.3}) and \cite[Theorem 3.1]{Fan2} (see Theorem \ref{thm1.5}).
Bacause we consider general triangulated categories rather than restricting to the perfect derived categories of smooth proper dg algebras as in \cite{Ouc} and \cite{Fan2}, some technical but elementary lemmas have to be established.

In Section \ref{sec4}, we give a proof of Proposition \ref{prop1.9} using Theorems \ref{thm1.6} and \ref{thm1.7}.

Finally, in Section \ref{sec5}, we see some examples coming from various twist functor constructions: Seidel--Thomas' spherical twists \cite{ST}, Huybrechts--Thomas' $\mathbb{P}$-twists \cite{HT} and Anno--Logvinenko's orthogonally spherical twists \cite{AL1}.
In particular, we see Theorems \ref{thm1.3} and \ref{thm1.5} can be deduced from Theorems \ref{thm1.6} and \ref{thm1.7}.
We also give a criterion to check the technical condition in Theorems \ref{thm1.6} and \ref{thm1.7}.
\subsection{Conventions}

Throughout the paper, all triangulated categories are linear over a field $\mathbb{K}$.
Moreover, whenever we deal with spherical functors, every triangulated category $\mathcal{D}$ is assumed to admit a {\em dg enhancement} in the sense that there exists a (pretriangulated) dg category $\mathcal{A}$ whose homotopy category $H^0(\mathcal{A})$ is equivalent to $\mathcal{D}$ as triangulated categories.
Similarly, every exact functor $\Phi : H^0(\mathcal{A}) \to H^0(\mathcal{B})$ between such triangulated categories is assumed to admit a {\em dg lift}, i.e., there exists a dg functor $\tilde{\Phi} : \mathcal{A} \to \mathcal{B}$ which descends to $\Phi$.
Also we will assume that $\mathcal{D}$ has a split-generator whenever the categorical entropy of an endofunctor of $\mathcal{D}$ is considered.

\begin{ack}
This work was supported by the Institute for Basic Science (IBS-R003-D1).
\end{ack}

\section{Preliminaries}\label{sec2}

\subsection{Categorical entropy}

Let us begin with reviewing the notion of the categorical entropy introduced by Dimitrov--Haiden--Katzarkov--Kontsevich \cite{DHKK}.

\begin{dfn}[\cite{DHKK}, Definition 2.1]
Let $E,F$ be objects of a triangulated category $\mathcal{D}$.
The {\em categorical complexity} of $F$ with respect to $E$ is the function $\delta_t(E,F) : \mathbb{R} \to [0,\infty]$ in $t$ defined by
\begin{equation*}
\delta_t(E,F) = \inf
\left\{\sum_{i=1}^k e^{n_i t} \,\left|\,
\begin{tikzcd}[column sep=tiny]
0 \ar[rr] & & * \ar[dl] \ar[rr] & & * \ar[dl] & \cdots & * \ar[rr] & & F \oplus F' \ar[dl]\\
& E[n_1] \ar[ul,"+1"] & & E[n_2] \ar[ul,"+1"] & & & & E[n_k] \ar[ul,"+1"] &
\end{tikzcd}
\right.\right\}
\end{equation*}
if $F \not\cong 0$, and $\delta_t(E,F) = 0$ if $F \cong 0$.
\end{dfn}

The categorical complexity satisfies the following properties.

\begin{lem}[\cite{DHKK}, Proposition 2.3]\label{lem2.2}
Let $E,E',E''$ be objects of a triangulated category $\mathcal{D}$.
\begin{itemize}
\item[(1)] $\delta_t(E,E'') \leq \delta_t(E,E') \delta_t(E',E'')$.
\item[(2)] $\delta_t(E,E' \oplus E'') \leq \delta_t(E,E') + \delta_t(E,E'')$.
\item[(3)] $\delta_t(\Phi E,\Phi E') \leq \delta_t(E,E')$ for any exact functor $\Phi : \mathcal{D} \to \mathcal{D}'$.
\end{itemize}
\end{lem}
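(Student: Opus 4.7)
The plan is to treat all three parts as standard Postnikov-tower manipulations in $\mathcal{D}$; the only nontrivial bookkeeping concerns the ``defect'' direct summands $F'$ that the definition of $\delta_t$ permits in the terminal object. I would handle the parts in the order (3), (2), (1), saving the main technical step for last. Part (3) is immediate: applying an exact functor $\Phi$ termwise to a tower realizing a value of $\delta_t(E,E')$ preserves triangles, shifts and finite direct sums, and hence produces a tower of the same length with cones $\Phi E[n_i]$ terminating in $\Phi E' \oplus \Phi F'$. The exponents $n_i$ are unchanged, so the total weight $\sum_i e^{n_i t}$ is unchanged; passing to infima yields (3).

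For (2), given towers $0 = X_0 \to \cdots \to X_k = E' \oplus F'$ with cones $E[n_i]$ and $0 = Y_0 \to \cdots \to Y_\ell = E'' \oplus F''$ with cones $E[m_j]$, I would concatenate them by setting $Z_j := X_k \oplus Y_j$ for $0 \le j \le \ell$. The triangles $Z_{j-1} \to Z_j \to E[m_j]$ inherit from the second tower, and the combined tower $0 \to X_1 \to \cdots \to X_k = Z_0 \to Z_1 \to \cdots \to Z_\ell$ terminates in $(E' \oplus F') \oplus (E'' \oplus F'')$, exhibiting $E' \oplus E''$ as a direct summand. The weight is additive, so $\delta_t(E, E'\oplus E'') \leq \sum_i e^{n_it} + \sum_j e^{m_jt}$, and taking infima gives (2).

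Part (1) is the main step. I begin with a tower $T_1$ presenting $E' \oplus F'$ from shifts of $E$ with cones $E[n_i]$, and a tower $T_2$ presenting $E'' \oplus F''$ from shifts of $E'$ with cones $E'[m_j]$. The strategy is to refine each cone $E'[m_j]$ appearing in $T_2$ by substituting in the shifted tower $T_1[m_j]$. Concretely, stabilizing the triangle $Y_{j-1} \to Y_j \to E'[m_j]$ by the identity triangle on $F'[m_j]$ produces a triangle
\begin{equation*}
Y_{j-1} \longrightarrow Y_j \oplus F'[m_j] \longrightarrow E'[m_j] \oplus F'[m_j],
\end{equation*}
whose third term is exactly the terminal object of $T_1[m_j]$. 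The standard Postnikov lifting lemma (proved inductively via the octahedral axiom) then lifts $T_1[m_j]$ to a sequence of cones from $Y_{j-1}$ to $Y_j \oplus F'[m_j]$ using the same cones $E[n_i + m_j]$. Splicing these mini-towers through $j = 1, \ldots, \ell$ yields a single tower from $0$ to $E'' \oplus F'' \oplus \bigoplus_j F'[m_j]$ with cones $\{E[n_i + m_j]\}_{i,j}$ and total weight $\bigl(\sum_i e^{n_i t}\bigr)\bigl(\sum_j e^{m_j t}\bigr)$. Taking infima over $T_1, T_2$ proves (1).

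The main obstacle I anticipate is the inductive compatibility of the splice in (1): after inserting the first $j-1$ mini-towers, the intermediate object must be shown to equal $Y_{j-1} \oplus \bigoplus_{j'<j} F'[m_{j'}]$, so that the $j$-th step indeed outputs exactly the claimed defect summand $F'[m_j]$. This is routine but requires careful octahedral bookkeeping, and the Postnikov lifting lemma itself — while standard — should probably be invoked or reproved explicitly to keep the argument self-contained.
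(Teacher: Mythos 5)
Your argument is correct: the paper itself gives no proof of this lemma (it is quoted from [DHKK, Proposition 2.3]), and what you write is the standard argument, identical in technique to the one the paper uses for its Lemmas \ref{lem3.1} and \ref{lem3.2} — stabilize each triangle by the identity triangle on the defect summands (so the accumulated $\tilde{F}$-type summands are carried along) and insert one cone decomposition into another via the octahedral axiom, then pass to infima. The bookkeeping you flag at the end is exactly the routine step the paper also leaves implicit, so no gap remains.
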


An object $G$ of a triangulated category $\mathcal{D}$ is called a {\em split-generator} if the smallest full triangulated subcategory containing $G$ and closed under taking direct summand coincides with $\mathcal{D}$.
The categorical entropy is then defined as follows.

\begin{dfn}[\cite{DHKK}, Definition 2.5]
Let $G$ be a split-generator of a triangulated category $\mathcal{D}$.
The {\em categorical entropy} of an exact endofunctor $\Phi : \mathcal{D} \to \mathcal{D}$ is the function $h_t(\Phi) : \mathbb{R} \to [-\infty,\infty)$ in $t$ defined by
\begin{equation*}
h_t(\Phi) = \lim_{n \to \infty} \frac{1}{n} \log \delta_t(G,\Phi^nG).
\end{equation*}
\end{dfn}

\begin{rmk}
The categorical entropy is well-defined (i.e., the limit exists in $[-\infty,\infty)$) and independent of the choice of a split-generator used to define it \cite[Lemma 2.6]{DHKK}.
Moreover, it can be computed as
\begin{equation*}
h_t(\Phi) = \lim_{n \to \infty} \frac{1}{n} \log \delta_t(G',\Phi^nG)
\end{equation*}
for any split-generators $G,G'$ of $\mathcal{D}$ \cite[Lemma 2.6]{Kik}.
\end{rmk}

\begin{lem}\label{lem2.5}
Let $\Phi : \mathcal{D} \to \mathcal{D}, \Phi' : \mathcal{D}' \to \mathcal{D}', \Psi : \mathcal{D} \to \mathcal{D}'$ be exact functors such that $\Psi \Phi \cong \Phi' \Psi$.
Assume that the essential image of $\Psi$ contains a split-generator of $\mathcal{D}'$.
Then
\begin{equation*}
h_t(\Phi) \geq h_t(\Phi').
\end{equation*}
\end{lem}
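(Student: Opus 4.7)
The plan is to reduce $h_t(\Phi')$ to an expression involving $\delta_t(G,\Phi^nG)$ by transporting a split-generator of $\mathcal{D}$ across $\Psi$ and exploiting the intertwining $\Psi\Phi\cong\Phi'\Psi$ (and hence $\Psi\Phi^n\cong(\Phi')^n\Psi$ by a trivial induction).

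First I would fix a split-generator $G$ of $\mathcal{D}$ and, using the hypothesis, a split-generator $G'$ of $\mathcal{D}'$ of the form $G'\cong\Psi H$ for some $H\in\mathcal{D}$. The two inputs one then needs are that both
\begin{equation*}
\delta_t(G',\Psi G)<\infty\quad\text{and}\quad\delta_t(\Psi G,G')<\infty.
\end{equation*}
The first is immediate because $G'$ is a split-generator of $\mathcal{D}'$; the second is the only nontrivial ingredient and is the step that actually uses the hypothesis. Indeed, since $G$ split-generates $\mathcal{D}$, $\delta_t(G,H)<\infty$, and Lemma \ref{lem2.2} (3) applied to $\Psi$ gives $\delta_t(\Psi G,G')=\delta_t(\Psi G,\Psi H)\le\delta_t(G,H)<\infty$.

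Next I would chain Lemma \ref{lem2.2} (1) twice and use Lemma \ref{lem2.2} (3) together with $(\Phi')^n\Psi\cong\Psi\Phi^n$:
\begin{align*}
\delta_t\bigl(G',(\Phi')^nG'\bigr)
&\le \delta_t(G',\Psi G)\cdot\delta_t\bigl(\Psi G,(\Phi')^n\Psi G\bigr)\cdot\delta_t\bigl((\Phi')^n\Psi G,(\Phi')^nG'\bigr)\\
&\le \delta_t(G',\Psi G)\cdot\delta_t(\Psi G,\Psi\Phi^nG)\cdot\delta_t(\Psi G,G')\\
&\le \delta_t(G',\Psi G)\cdot\delta_t(G,\Phi^nG)\cdot\delta_t(\Psi G,G'),
\end{align*}
where the last inequality uses Lemma \ref{lem2.2} (3) applied to $\Psi$ on the middle factor. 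Taking $\tfrac{1}{n}\log$ and letting $n\to\infty$, the two outer factors contribute $0$ since they are finite constants independent of $n$, and one is left with $h_t(\Phi')\le h_t(\Phi)$.

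The only subtlety I anticipate is the well-definedness of the chain when $\Psi G\cong 0$; but that case cannot occur, since $G$ split-generates $\mathcal{D}$, so $\Psi G\cong 0$ would force $\Psi X\cong 0$ for every $X\in\mathcal{D}$ and in particular $G'\cong\Psi H\cong 0$, contradicting that $G'$ is a split-generator of $\mathcal{D}'$ (assumed nonzero for entropy to be considered). Modulo this observation, the argument is purely formal from Lemma \ref{lem2.2}, and the one genuine hypothesis (the essential image of $\Psi$ containing a split-generator) enters precisely once, to guarantee $\delta_t(\Psi G,G')<\infty$.
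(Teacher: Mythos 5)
Your argument is correct and is essentially the paper's: the key steps are identical (Lemma \ref{lem2.2} (3) applied to $\Psi$ together with $\Psi\Phi^n\cong\Phi'^n\Psi$), and you differ only in bookkeeping. The paper simply replaces $G$ by $G\oplus H$ so that $\Psi G$ is itself a split-generator of $\mathcal{D}'$ and computes $h_t(\Phi')$ directly with that generator, whereas you keep an arbitrary $G$ and sandwich $\delta_t(G',\Phi'^nG')$ between the finite constants $\delta_t(G',\Psi G)$ and $\delta_t(\Psi G,G')$ via Lemma \ref{lem2.2} (1) — both are fine.
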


\begin{proof}
Let $G$ be a split-generator of $\mathcal{D}$ such that $\Psi G$ is a split-generator of $\mathcal{D}'$.
Then
\begin{align*}
h_t(\Phi)
&= \lim_{n \to \infty} \frac{1}{n} \log \delta_t(G,\Phi^nG)\\
&\geq \lim_{n \to \infty} \frac{1}{n} \log \delta_t(\Psi G,\Psi\Phi^nG)\\
&= \lim_{n \to \infty} \frac{1}{n} \log \delta_t(\Psi G,\Phi'^n\Psi G)\\
&= h_t(\Phi')
\end{align*}
where the second inequality follows from Lemma \ref{lem2.2} (3).
\end{proof}

\subsection{Spherical functors}

Now we review the notion of a spherical functor and the (co)twist functors along them introduced by Anno--Logvinenko \cite{AL2}.
We also show some of their properties which will be used to prove the main theorems.

\begin{dfn}[\cite{AL2}, Definition 5.2]\label{dfn2.6}
Let $\mathcal{C},\mathcal{D}$ be triangulated categories.
An exact functor $S : \mathcal{C} \to \mathcal{D}$ with right and left adjoint functors $R,L$ is called a {\em spherical functor} if it satisfies the following conditions:
\begin{itemize}
\item[(1)] The {\em twist functor} $T_S = \mathrm{Cone}(SR \overset{\varepsilon}{\to} \mathrm{Id}_\mathcal{D})$ is an exact autoequivalence of $\mathcal{D}$ where $\varepsilon : SR \to \mathrm{Id}_\mathcal{D}$ is the counit of the adjoint pair $S \dashv R$.
\item[(2)] The {\em cotwist functor} $C_S = \mathrm{Cone}(\mathrm{Id}_\mathcal{C} \overset{\eta}{\to} RS)[-1]$ is an exact autoequivalence of $\mathcal{C}$ where $\eta : \mathrm{Id}_\mathcal{C} \to RS$ is the unit of the adjoint pair $S \dashv R$.
\item[(3)] $R \cong LT_S[-1]$.
\item[(4)] $R \cong C_SL[1]$.
\end{itemize}
\end{dfn}

\begin{rmk}
In order to have functorial cones, we should take dg enhancements of triangulated categories $\mathcal{C},\mathcal{D}$ and dg lifts of exact functors $S,R,L$.
However, for the sake of simplicity, we will refrain from using the dg category language in this paper.
For a detailed argument, see \cite[Section 4]{AL2}.
\end{rmk}

\begin{thm}[\cite{AL2}, Theorem 5.1]
Any two out of the four conditions in Definition \ref{dfn2.6} imply the other two.
\end{thm}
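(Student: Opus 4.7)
The plan is to encode each of the four conditions as the invertibility of a canonical natural transformation, and then show that any two such transformations being isomorphisms force the other two, using diagram chases in the ambient triangulated category (made rigorous by the dg enhancements noted in Remark 2.7).

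First, I would set up the comparison morphisms. From the adjunctions $L \dashv S \dashv R$, one has the counit $\varepsilon : SR \to \mathrm{Id}_\mathcal{D}$, the unit $\eta : \mathrm{Id}_\mathcal{C} \to RS$, and the defining triangles $SR \to \mathrm{Id}_\mathcal{D} \to T_S \to SR[1]$ and $C_S \to \mathrm{Id}_\mathcal{C} \to RS \to C_S[1]$. Whiskering these triangles by $L$ or $R$ and composing with the appropriate unit/counit maps produces canonical natural transformations $\mu_3 : R \to LT_S[-1]$ and $\mu_4 : C_S L[1] \to R$. Condition (3) is the statement that $\mu_3$ is an isomorphism and condition (4) that $\mu_4$ is. For (1) and (2), the autoequivalence property can likewise be recast: using the triangles above, $T_S$ (resp.\ $C_S$) is an equivalence precisely when a certain unit/counit-like map built from these triangles is invertible.

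Second, I would exploit a symmetry to cut the number of cases. The data $(\mathcal{C}, \mathcal{D}, S, R, L)$ admits an involution that swaps $R$ and $L$ and reverses the direction of $S$ (viewing $R$ itself as a candidate spherical functor in the opposite direction). Under this symmetry conditions (1) and (2) are exchanged, as are (3) and (4). Therefore, among the $\binom{4}{2}=6$ pairs, it suffices to treat essentially two genuinely different cases, for instance (1)+(3) $\Rightarrow$ (2)+(4) and (1)+(2) $\Rightarrow$ (3)+(4).

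Third, I would carry out the diagram chases. For (1)+(3) $\Rightarrow$ (2)+(4): applying $L$ to the twist triangle, condition (3) identifies $LT_S[-1]$ with $R$, which when combined with invertibility of $T_S$ lets me produce an explicit quasi-inverse of the map $\mathrm{Id}_\mathcal{C} \to RS$ shifted appropriately, from which (2) follows; (4) is then obtained as a consequence by another application of the octahedral axiom to the resulting configuration. For (1)+(2) $\Rightarrow$ (3)+(4): here both $T_S^{-1}$ and $C_S^{-1}$ are available, and one builds $\mu_3$ and $\mu_4$ as explicit composites of the unit, counit, and these inverses, checking invertibility by a zig-zag computation in the triangulated structure.

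The main obstacle I expect is the very construction of the comparison transformations and the verification that the relevant octahedra commute; since cones in a triangulated category are not functorial, this step is where the dg enhancement hypothesis is essential, and a careful bookkeeping of adjunction units/counits along the triangles is the most error-prone part of the argument. Once the transformations are canonically fixed, each implication becomes a bounded diagram chase.
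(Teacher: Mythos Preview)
The paper does not supply a proof of this statement at all: it is simply quoted as \cite[Theorem 5.1]{AL2} and used as a black box. There is therefore no proof in the paper to compare your proposal against; what you have sketched is an attempted reconstruction of Anno--Logvinenko's argument, which is external to this paper.

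That said, your sketch contains a concrete error in the case reduction. Under the involution you describe (swapping $R \leftrightarrow L$ and passing to opposites, so that $(1)\leftrightarrow(2)$ and $(3)\leftrightarrow(4)$), the unordered pairs of conditions fall into \emph{four} orbits, not two: $\{1,2\}$ and $\{3,4\}$ are each fixed, while $\{1,3\}\leftrightarrow\{2,4\}$ and $\{1,4\}\leftrightarrow\{2,3\}$. Hence treating only $(1)+(3)\Rightarrow\text{rest}$ and $(1)+(2)\Rightarrow\text{rest}$ leaves the cases $(3)+(4)\Rightarrow\text{rest}$ and $(1)+(4)\Rightarrow\text{rest}$ unaccounted for. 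In the original source these additional cases are handled separately (and are not trivial), so your reduction as written is a genuine gap. The remainder of your outline---encoding the conditions as invertibility of canonical comparison maps and chasing octahedra, made honest by the dg enhancement---is indeed the shape of the argument in \cite{AL2}, but the bookkeeping is substantially heavier than your sketch suggests, and in particular the construction of the canonical maps $\mu_3,\mu_4$ and the verification that they are the \emph{right} maps (not merely some maps) is where most of the work lies.
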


\begin{lem}\label{lem2.9}
Let $S : \mathcal{C} \to \mathcal{D}$ be a spherical functor with right and left adjoint functors $R,L$.
Then we have the following isomorphisms:
\begin{itemize}
\item[(1)] $T_S^nS \cong S(C_S[2])^n$.
\item[(2)] $RT_S^n \cong (C_S[2])^nR$.
\item[(3)] $LT_S^n \cong (C_S[2])^nL$.
\item[(4)] $SRT_S^n \cong T_S^nSR$.
\item[(5)] $SLT_S^n \cong T_S^nSL$.
\item[(6)] $RS(C_S[2])^n \cong (C_S[2])^nRS$.
\item[(7)] $LS(C_S[2])^n \cong (C_S[2])^nLS$.
\end{itemize}
\end{lem}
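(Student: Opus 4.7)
The plan is to reduce the seven isomorphisms to the $n = 1$ cases of identities (1), (2), and (3). Once these three base cases are in hand, the general-$n$ versions follow by a routine induction (using that $T_S$ and $C_S$ are autoequivalences), and identities (4)--(7) drop out by formal whiskering.

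For the three $n = 1$ base cases, the fastest route is to work directly from Definition~\ref{dfn2.6}. Its conditions (3) and (4) rearrange to $LT_S \cong R[1]$ and $C_SL \cong R[-1]$; substituting one into the other immediately produces
\begin{equation*}
LT_S \cong C_SL[2] = (C_S[2])L,
\end{equation*}
which is identity (3) at $n = 1$. A parallel substitution---whisker $R \cong C_SL[1]$ by $T_S$ on the right and then apply $LT_S \cong R[1]$---yields $RT_S \cong (C_S[2])R$, which is identity (2) at $n = 1$. The $n = 1$ case of (1) I would then deduce by taking right adjoints in (3): since the right adjoint of $L$ is $S$ and those of $T_S$ and $C_S[2]$ are their inverses, this gives $T_S^{-1}S \cong S(C_S[2])^{-1}$, i.e.\ $T_SS \cong S(C_S[2])$. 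A more conceptual alternative for (1) at $n=1$ is the octahedral axiom applied to the composable pair $S \xrightarrow{S\eta} SRS \xrightarrow{\varepsilon S} S$: its composition equals $\mathrm{id}_S$ by the triangle identity, whiskering the defining triangles of $C_S$ and $T_S$ identifies the outer cones as $SC_S[1]$ and $T_SS$, and $\mathrm{Cone}(\mathrm{id}_S) = 0$ then forces $T_SS \cong SC_S[2]$.

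The inductive extension to all $n \in \mathbb{Z}$ is immediate, e.g.\ $T_S^{n+1}S \cong T_S(T_S^n S) \cong (T_SS)(C_S[2])^n \cong S(C_S[2])^{n+1}$, with negative $n$ handled by invertibility. The remaining identities then fall out by inserting the already proven ones: whiskering (2) by $S$ on the left and applying (1) gives $SRT_S^n \cong S(C_S[2])^nR \cong T_S^nSR$, which is (4); replacing $R$ by $L$ and (2) by (3) gives (5). Whiskering (1) by $R$ (respectively $L$) on the left and applying (2) (respectively (3)) gives (6) (respectively (7)). Since each step is either a one-line substitution or a trivial induction, I do not anticipate any real obstacle; the only place mild care is required is the $n = 1$ cases of (1), (2), (3), where one must pay attention to the order of composition and to the side on which each whiskering occurs.
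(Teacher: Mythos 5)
Your proposal is correct and follows essentially the same route as the paper: the $n=1$ case of (1) via the octahedral axiom applied to $S\eta$ and $\varepsilon S$ (your adjunction argument is a valid alternative), (2) and (3) by combining conditions (3) and (4) of Definition~\ref{dfn2.6}, and the general $n$ and items (4)--(7) by formal iteration and whiskering.
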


\begin{proof}
(1)
It is enough to show the isomorphism for $n=1$.
Let $\varepsilon : SR \to \mathrm{Id}_\mathcal{D}$ be the counit and $\eta : \mathrm{Id}_\mathcal{C} \to RS$ be the unit of the adjoint pair $S \dashv R$.
By definition, they satisfy $\varepsilon S \circ S\eta = \mathrm{Id}_S$.
Then, by the octahedral axiom, we get
\begin{equation*}
\begin{tikzcd}
S \ar[r,"S\eta"] \ar[d,equal] & SRS \ar[d,"\varepsilon S"] \ar[r] & SC_S[1] \ar[r] \ar[d] & S[1] \ar[d,equal]\\
S \ar[r,"\mathrm{Id}_S",swap] & S \ar[r] \ar[d] & 0 \ar[r] \ar[d] & S[1]\\
& T_SS \ar[r,equal] \ar[d] & T_SS \ar[d] &\\
& SRS[1] \ar[r] & SC_S[2] &
\end{tikzcd}
\end{equation*}
and hence $T_SS \cong SC_S[2]$.

(2), (3)
These isomorphisms follow from Definition \ref{dfn2.6} (3), (4).

(4), (5), (6), (7)
These isomorphisms follow from (1), (2), (3).
\end{proof}

\section{Entropy of twist and cotwist functors}\label{sec3}

\subsection{Lower bound}

Let us first prove Theorem \ref{thm1.7}.

\begin{proof}[Proof of Theorem \ref{thm1.7}]
(1)
This immediately follows from Lemmas \ref{lem2.5} and \ref{lem2.9} (2).

(2)
Take an object $0 \neq E \in \mathrm{Ker}\, SR$.
Then, from the exact triangle $SRE \to E \to T_SE \to SRE[1]$, it follows that $T_SE \cong E$.
Thus, for a split-generator $G$ of $\mathcal{D}$, we have
\begin{align*}
h_t(T_S)
&= \lim_{n \to \infty} \frac{1}{n} \log \delta_t(G,T_S^n(G \oplus E))\\
&\geq \lim_{n \to \infty} \frac{1}{n} \log \delta_t(G,T_S^nE)\\
&= \lim_{n \to \infty} \frac{1}{n} \log \delta_t(G,E)\\
&= 0.
\end{align*}
This completes the proof.
\end{proof}

\subsection{Upper bound}

We say that an object $F$ of a triangulated category $\mathcal{D}$ admits a {\em cone decomposition} with components $(E_1,E_2,\dots,E_k)$ if there is a sequence of exact triangles in $\mathcal{D}$ of the form
\begin{equation*}
\begin{tikzcd}[column sep=tiny]
0 \ar[rr] & & * \ar[dl] \ar[rr] & & * \ar[dl] & \cdots & * \ar[rr] & & F \ar[dl]\\
& E_1 \ar[ul,"+1"] & & E_2 \ar[ul,"+1"] & & & & E_k. \ar[ul,"+1"] &
\end{tikzcd}
\end{equation*}

The following two lemmas follow from the well-known fact that {\em a cone decomposition can be \textquote{inserted} into another cone decomposition}: a cone decomposition of $F$ with components $(E_1,E_2,\dots,E_k)$ and a cone decomposition of $E_i$ with components $(E'_1,E'_2,\dots,E'_l)$ give rise to a cone decomposition of $F$ with components $(E_1,\dots,E_{i-1},E'_1,\dots,E'_l,E_{i+1},\dots,E_k)$.

\begin{lem}\label{lem3.1}
Let $E$ be an object and $F' \to F \to F'' \to F'[1]$ be an exact triangle of a triangulated category $\mathcal{D}$.
Then
\begin{equation*}
\delta_t(E,F) \leq \delta_t(E,F') + \delta_t(E,F'').
\end{equation*}
\end{lem}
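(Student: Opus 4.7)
The plan is to exploit the flexibility built into the definition of $\delta_t$, namely that the cone decomposition is required only for some direct summand extension $F \oplus F_0$, together with the ``insertion'' principle for cone decompositions that is stated right before the lemma.

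First I would fix $\varepsilon > 0$ and choose near-optimal cone decompositions for $F'$ and $F''$: objects $F'_0, F''_0$ together with cone decompositions of $F' \oplus F'_0$ with components $(E[n_1], \dots, E[n_k])$ and of $F'' \oplus F''_0$ with components $(E[m_1], \dots, E[m_l])$ satisfying
\begin{equation*}
\sum_{i=1}^k e^{n_i t} \leq \delta_t(E, F') + \varepsilon, \qquad \sum_{j=1}^l e^{m_j t} \leq \delta_t(E, F'') + \varepsilon.
\end{equation*}
(If either $F'$ or $F''$ is zero, the corresponding decomposition is empty and the argument below simplifies accordingly.)

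Next I would form an exact triangle that puts $F$ itself (up to a direct summand) in the middle. Taking the direct sum of the given triangle $F' \to F \to F'' \to F'[1]$ with the split triangles $F'_0 \xrightarrow{\mathrm{id}} F'_0 \to 0 \to F'_0[1]$ and $0 \to F''_0 \xrightarrow{\mathrm{id}} F''_0 \to 0$ yields an exact triangle
\begin{equation*}
F' \oplus F'_0 \longrightarrow F \oplus F'_0 \oplus F''_0 \longrightarrow F'' \oplus F''_0 \longrightarrow (F' \oplus F'_0)[1].
\end{equation*}
In particular $F \oplus F'_0 \oplus F''_0$ admits a cone decomposition with just two components, namely $F' \oplus F'_0$ followed by $F'' \oplus F''_0$. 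Applying the insertion principle recalled in the paragraph preceding the lemma, I can expand these two components using the cone decompositions chosen above, obtaining a cone decomposition of $F \oplus F'_0 \oplus F''_0$ with components $(E[n_1], \dots, E[n_k], E[m_1], \dots, E[m_l])$. By the definition of $\delta_t$ this gives
\begin{equation*}
\delta_t(E, F) \leq \sum_{i=1}^k e^{n_i t} + \sum_{j=1}^l e^{m_j t} \leq \delta_t(E, F') + \delta_t(E, F'') + 2\varepsilon,
\end{equation*}
and letting $\varepsilon \to 0$ yields the desired inequality.

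The only genuine subtlety is the bookkeeping around the optional direct summand in the definition of the categorical complexity; this is precisely what makes the direct sum with trivial triangles necessary, so that the object in the middle of the new triangle differs from $F$ only by the auxiliary summand $F'_0 \oplus F''_0$, which is harmless for the definition of $\delta_t(E,F)$. The edge cases $\delta_t(E, F') = \infty$ or $\delta_t(E, F'') = \infty$ make the inequality trivial, and $F \cong 0$ is covered by the convention $\delta_t(E, 0) = 0$.
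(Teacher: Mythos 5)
Your proposal is correct and follows essentially the same route as the paper's proof: choose $\varepsilon$-optimal cone decompositions of $F' \oplus \tilde{F}'$ and $F'' \oplus \tilde{F}''$, pass to the exact triangle $F' \oplus \tilde{F}' \to F \oplus \tilde{F}' \oplus \tilde{F}'' \to F'' \oplus \tilde{F}''$, and concatenate via the insertion principle. Your extra remark on building that triangle as a direct sum with split triangles just makes explicit a step the paper leaves implicit.
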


\begin{proof}
Take $\varepsilon > 0$ and choose a cone decomposition of $F' \oplus \tilde{F}'$ with components
\begin{equation*}
(E[n_1],E[n_2],\dots,E[n_k])
\end{equation*}
and a cone decomposition of $F'' \oplus \tilde{F}''$ with components
\begin{equation*}
(E[m_1],E[m_2],\dots,E[m_l])
\end{equation*}
so that
\begin{equation*}
\sum_{i=1}^k e^{n_it} < \delta_t(E,F') + \varepsilon \quad\text{and}\quad \sum_{j=1}^l e^{m_jt} < \delta_t(E,F'') + \varepsilon.
\end{equation*}

Then, from the exact triangle $F' \oplus \tilde{F}' \to F \oplus \tilde{F}' \oplus \tilde{F}'' \to F'' \oplus \tilde{F}'' \to F' \oplus \tilde{F}'[1]$, we see that $F \oplus \tilde{F}' \oplus \tilde{F}''$ admits a cone decomposition with components
\begin{equation*}
(E[n_1],\dots,E[n_k],E[m_1],\dots,E[m_l]).
\end{equation*}
It shows that
\begin{align*}
\delta_t(E,F)
&\leq \sum_{i=1}^k e^{n_it} + \sum_{j=1}^l e^{m_jt}\\
&< \delta_t(E,F') + \delta_t(E,F'') + 2\varepsilon.
\end{align*}
Since $\varepsilon > 0$ is arbitrary, the assertion follows.
\end{proof}

\begin{lem}\label{lem3.2}
Let $F,G$ be objects and $E' \to E \to E'' \to E'[1]$ be an exact triangle of a triangulated category $\mathcal{D}$.
Then
\begin{equation*}
\delta_t(G,F) \leq (\delta_t(G,E') + \delta_t(G,E''))\delta_t(E,F).
\end{equation*}
\end{lem}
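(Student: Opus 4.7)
The plan is to prove the bound by performing a two-level insertion of near-optimal cone decompositions and then collecting the geometric estimate at the end. Given $\varepsilon>0$, I would first pick three cone decompositions realizing the complexities up to $\varepsilon$: a decomposition of $F\oplus\tilde F$ with components $(E[n_1],\dots,E[n_k])$ such that $\sum_i e^{n_it}<\delta_t(E,F)+\varepsilon$, a decomposition of $E'\oplus\tilde E'$ with components $(G[p_1],\dots,G[p_a])$ with $\sum_j e^{p_jt}<\delta_t(G,E')+\varepsilon$, and a decomposition of $E''\oplus\tilde E''$ with components $(G[q_1],\dots,G[q_b])$ with $\sum_j e^{q_jt}<\delta_t(G,E'')+\varepsilon$. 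Shifting the latter two by $n_i$ gives analogous decompositions of $E'[n_i]\oplus\tilde E'[n_i]$ and $E''[n_i]\oplus\tilde E''[n_i]$ in terms of shifts of $G$.

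Next I would use the exact triangle $E'[n_i]\to E[n_i]\to E''[n_i]\to E'[n_i+1]$ to give $E[n_i]$ a two-step cone decomposition with components $(E'[n_i],E''[n_i])$. Combining this with the cone decompositions above by the same splicing technique used in Lemma~\ref{lem3.1} (adding $\tilde E'[n_i]\to\tilde E'[n_i]\to0$ and $0\to\tilde E''[n_i]\to\tilde E''[n_i]$ to absorb the auxiliary summands and then inserting the $G$-shift decompositions in place of $E'[n_i]\oplus\tilde E'[n_i]$ and $E''[n_i]\oplus\tilde E''[n_i]$) produces a cone decomposition of
\begin{equation*}
E[n_i]\oplus\tilde E'[n_i]\oplus\tilde E''[n_i]
\end{equation*}
with components $(G[p_1+n_i],\dots,G[p_a+n_i],G[q_1+n_i],\dots,G[q_b+n_i])$.

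Finally I would insert these inner decompositions into the outer decomposition of $F\oplus\tilde F$. Using the same augmentation trick, the object
\begin{equation*}
F\oplus\tilde F\oplus\bigoplus_{i=1}^k\bigl(\tilde E'[n_i]\oplus\tilde E''[n_i]\bigr)
\end{equation*}
acquires a cone decomposition whose components are exactly the $G[p_j+n_i]$ and $G[q_j+n_i]$ for all $i,j$. Hence by the definition of $\delta_t$,
\begin{equation*}
\delta_t(G,F)\;\leq\;\sum_{i=1}^k\sum_{j=1}^a e^{(p_j+n_i)t}+\sum_{i=1}^k\sum_{j=1}^b e^{(q_j+n_i)t}\;=\;\Bigl(\sum_i e^{n_it}\Bigr)\Bigl(\sum_j e^{p_jt}+\sum_j e^{q_jt}\Bigr),
\end{equation*}
which is bounded above by $(\delta_t(E,F)+\varepsilon)(\delta_t(G,E')+\delta_t(G,E'')+2\varepsilon)$. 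Letting $\varepsilon\to0$ yields the desired inequality.

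The main obstacle is purely bookkeeping: the definition of $\delta_t$ only provides cone decompositions up to adding arbitrary direct summands $\tilde F,\tilde E',\tilde E''$, so one has to be careful that the insertion procedure survives these summands and that all the auxiliary pieces end up absorbed into a single direct summand on the left-hand side. The splicing argument from Lemma~\ref{lem3.1}, combined with the fact that adding an identity triangle $\tilde X\to\tilde X\to0$ or $0\to\tilde Y\to\tilde Y$ to an exact triangle preserves exactness, handles this cleanly, after which the estimate factors as a product because the inner decompositions are shifted uniformly by $n_i$.
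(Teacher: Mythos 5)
Your proposal is correct and follows essentially the same route as the paper's proof: choose three near-optimal cone decompositions, refine the $E$-decomposition of $F\oplus\tilde F$ via the triangle $E'\to E\to E''\to E'[1]$, splice in the $G$-decompositions while absorbing the auxiliary summands into one direct summand, and take $\varepsilon\to 0$ to obtain the product bound. No gaps; the bookkeeping with the added summands $\tilde E'[n_i]\oplus\tilde E''[n_i]$ is exactly how the paper handles it.
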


\begin{proof}
Take $\varepsilon > 0$ and choose a cone decomposition of $F \oplus \tilde{F}$ with components
\begin{equation*}
(E[n_1],E[n_2],\dots,E[n_k]),
\end{equation*}
a cone decomposition of $E' \oplus \tilde{E}'$ with components
\begin{equation*}
(G[n'_1],G[n'_2],\dots,G[n'_l])
\end{equation*}
and a cone decomposition of $E'' \oplus \tilde{E}''$ with components
\begin{equation*}
(G[n''_1],G[n''_2],\dots,G[n''_m])
\end{equation*}
so that
\begin{equation*}
\sum_{i=1}^k e^{n_it} < \delta_t(E,F) + \varepsilon, \quad \sum_{j=1}^l e^{n'_jt} < \delta_t(G,E') + \varepsilon \quad\text{and}\quad \sum_{j=1}^m e^{n''_jt} < \delta_t(G,E'') + \varepsilon.
\end{equation*}

Using the exact triangle $E' \to E \to E'' \to E'[1]$, we can further decompose the previously chosen cone decomposition of $F \oplus \tilde{F}$.
Then we obtain a new cone decomposition of $F \oplus \tilde{F}$ with components
\begin{equation*}
(E'[n_1],E''[n_1],\dots,E'[n_k],E''[n_k])
\end{equation*}
This implies that $F \oplus \tilde{F}'$, where $\tilde{F}' = \tilde{F} \oplus \tilde{E}'[n_1] \oplus \cdots \oplus \tilde{E}'[n_k] \oplus \tilde{E}''[n_1] \oplus \cdots \oplus \tilde{E}''[n_k]$, admits a cone decomposition with components
\begin{equation*}
((E' \oplus \tilde{E}')[n_1],(E'' \oplus \tilde{E}'')[n_1],\dots,(E' \oplus \tilde{E}')[n_k],(E'' \oplus \tilde{E}'')[n_k])
\end{equation*}
and thus one with components
\begin{gather*}
(G[n_1+n'_1],\dots,G[n_1+n'_l],G[n_1+n''_1],\dots,G[n_1+n''_m],\dots,\\
G[n_k+n'_1],\dots,G[n_k+n'_l],G[n_k+n''_1],\dots,G[n_k+n''_m]).
\end{gather*}
Therefore, it follows that
\begin{align*}
\delta_t(G,F)
&\leq \sum_{i=1}^k \sum_{j=1}^l e^{(n_i+n'_j)t} + \sum_{i=1}^k \sum_{j=1}^m e^{(n_i+n''_j)t}\\
&= \left(\sum_{j=1}^l e^{n'_jt} + \sum_{j=1}^m e^{n''_jt}\right)\sum_{i=1}^k e^{n_it}\\
&< (\delta_t(G,E') + \delta_t(G,E'') + 2\varepsilon)(\delta_t(E,F) + \varepsilon) .
\end{align*}
Since $\varepsilon > 0$ is arbitrary, this completes the proof.
\end{proof}

\begin{cor}\label{cor3.3}
Let $E$ (resp. $F$) be an object of a triangulated category $\mathcal{D}$ (resp. $\mathcal{C}$) and $\Phi : \mathcal{C} \to \mathcal{D}$ be an exact functor with right adjoint functor $R$.
Then
\begin{equation*}
\delta_t(E,\Phi F) \leq (1 + \delta_t(E,T_\Phi E[-1])) \delta_t(RE,F)
\end{equation*}
where $T_\Phi = \mathrm{Cone}(\Phi R \to \mathrm{Id}_\mathcal{D})$. 
\end{cor}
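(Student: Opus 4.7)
The plan is to combine the counit triangle of the adjunction $\Phi \dashv R$ with the two basic estimates established just above (Lemma \ref{lem2.2} (3) and Lemma \ref{lem3.2}). The defining exact triangle of $T_\Phi$ applied to $E$ reads $\Phi RE \to E \to T_\Phi E \to \Phi RE[1]$, which after a rotation gives the exact triangle
\[
T_\Phi E[-1] \longrightarrow \Phi RE \longrightarrow E \longrightarrow T_\Phi E.
\]
This is the only structural input; the rest is purely formal manipulation of categorical complexities.

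I would then invoke Lemma \ref{lem3.2} with $G$ replaced by $E$ (the object of $\mathcal{D}$ appearing in the corollary), $F$ replaced by $\Phi F$, and the exact triangle above playing the role of $E' \to E_\text{mid} \to E'' \to E'[1]$ (so $E' = T_\Phi E[-1]$, the middle vertex is $\Phi RE$, and $E'' = E$). The lemma yields
\[
\delta_t(E, \Phi F) \leq \bigl(\delta_t(E, T_\Phi E[-1]) + \delta_t(E,E)\bigr)\,\delta_t(\Phi RE, \Phi F).
\]
Two routine simplifications finish the argument: the trivial one-step cone decomposition $0 \to E \to E$ gives $\delta_t(E,E) \leq 1$, and Lemma \ref{lem2.2} (3) applied to the functor $\Phi$ gives $\delta_t(\Phi RE, \Phi F) \leq \delta_t(RE, F)$. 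Substituting these bounds produces exactly the inequality claimed in Corollary \ref{cor3.3}.

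There is no real obstacle here; the only subtle point is choosing the correct rotation of the counit triangle so that $E$ itself appears as the rightmost component, which is what lets $\delta_t(E,E)$ rather than something larger contribute the constant $1$ in the final bound. The corollary is essentially a packaging of the counit triangle through the machinery of Lemmas \ref{lem2.2} and \ref{lem3.2}, and it is this form that will be iterated in the subsequent proof of the upper bound (Theorem \ref{thm1.6}).
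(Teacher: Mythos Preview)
Your proof is correct and essentially identical to the paper's: both use the rotated counit triangle $T_\Phi E[-1] \to \Phi RE \to E \to T_\Phi E$, apply Lemma \ref{lem3.2} together with $\delta_t(E,E) \leq 1$, and finish with Lemma \ref{lem2.2} (3). The only difference is cosmetic---the paper chains the inequalities starting from the right-hand side, while you start from the left.
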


\begin{proof}
By definition, there is an exact triangle $T_\Phi E[-1] \to \Phi RE \to E \to T_\Phi E$.
Hence ,we see that
\begin{align*}
(1 + \delta_t(E,T_\Phi E[-1])) \delta_t(RE,F)
&\geq (1 + \delta_t(E,T_\Phi E[-1])) \delta_t(\Phi RE,\Phi F)\\
&\geq (\delta_t(E,E) + \delta_t(E,T_\Phi E[-1])) \delta_t(\Phi RE,\Phi F)\\
&\geq \delta_t(E,\Phi F)
\end{align*}
where the first inequality follows from Lemma \ref{lem2.2} (3) and the last inequality follows from Lemma \ref{lem3.2}.
\end{proof}

Recall that a sequence $\{a_n\}_{n=1}^\infty$ is called {\em submultiplicative} if $a_{n+m} \leq a_na_m$ for all $n,m$.
If $a_n > 0$ for all $n$, then the sequence $\{b_n = \log a_n\}_{n=1}^\infty$ is {\em subadditive}, i.e., $b_{n+m} \leq b_n + b_m$ for all $n,m$.
Therefore the sequence $\{\frac{1}{n} \log a_n\}_{n=1}^\infty$ converges and
\begin{equation*}
\lim_{n \to \infty} \frac{1}{n} \log a_n = \inf_{n \geq 1} \frac{1}{n} \log a_n
\end{equation*}
by the {\em Fekete's subadditive lemma}.

\begin{lem}\label{lem3.4}
Let $\{a_n\}_{n=1}^\infty$ be a submultiplicative sequence of positive real numbers.
Then the sequence $\{\frac{1}{n} \log (1 + \sum_{i=1}^n a_i)\}_{n=1}^\infty$ converges and
\begin{equation*}
\lim_{n \to \infty} \frac{1}{n} \log \left(1 + \sum_{i=1}^n a_i\right) \leq
\begin{cases}
0 & (\text{if } \lim \frac{1}{n} \log a_n \leq 0),\\
\displaystyle \lim_{n \to \infty} \frac{1}{n} \log a_n & (\text{if } \lim \frac{1}{n} \log a_n \geq 0).
\end{cases}
\end{equation*}
\end{lem}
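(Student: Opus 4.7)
The plan is to first verify that the shifted sequence $b_n := 1 + \sum_{i=1}^n a_i$ is itself a submultiplicative sequence of positive reals, so that Fekete's lemma (which was just recalled above the statement) immediately delivers the convergence of $\frac{1}{n}\log b_n$. The upper bound will then come from a direct geometric-series estimate.

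To check submultiplicativity of $\{b_n\}$, I would expand
\begin{equation*}
b_n b_m - b_{n+m} = \sum_{j=1}^m a_j + \sum_{i=1}^n \sum_{j=1}^m a_i a_j - \sum_{l=1}^m a_{n+l},
\end{equation*}
and control the negative term by $\sum_{l=1}^m a_{n+l} \leq a_n \sum_{l=1}^m a_l \leq \bigl(\sum_{i=1}^n a_i\bigr)\bigl(\sum_{l=1}^m a_l\bigr)$, where the first inequality uses submultiplicativity of $\{a_n\}$ and the second uses the trivial bound $a_n \leq \sum_{i=1}^n a_i$. This makes $b_n b_m - b_{n+m} \geq \sum_j a_j \geq 0$, so Fekete applies to $\{b_n\}$ and the limit $\lim_n \tfrac{1}{n}\log b_n$ exists in $[0,\infty)$ (it is at least $0$ because $b_n \geq 1$).

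For the upper bound, set $h := \lim_n \tfrac{1}{n}\log a_n$ and fix $\epsilon > 0$. Since $\tfrac{1}{n}\log a_n \to h$, enlarging a constant absorbs the finitely many initial terms and produces $C \geq 1$ with $a_n \leq C e^{n(h+\epsilon)}$ for all $n \geq 1$. When $h + \epsilon > 0$, summing the geometric bound gives $\sum_{i=1}^n a_i \leq C' e^{n(h+\epsilon)}$, hence $\lim \tfrac{1}{n}\log b_n \leq h + \epsilon$; when $h + \epsilon \leq 0$, the partial sums $\sum a_i$ grow at most linearly in $n$ (or are uniformly bounded), so $\lim \tfrac{1}{n}\log b_n \leq 0$. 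The desired bound then follows by letting $\epsilon \to 0^+$: in the regime $h \geq 0$ use the first case to obtain $\leq h$; in the regime $h < 0$ pick $\epsilon < -h$ and use the second case to obtain $\leq 0$; the boundary $h = 0$ is handled by either.

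The argument is essentially mechanical once submultiplicativity of $\{b_n\}$ is in place, so the only real obstacle is carrying out the sign-analysis of $h + \epsilon$ cleanly around the critical value $h = 0$, and being careful that the constant $C$ absorbs all small $n$ so the geometric estimate holds uniformly.
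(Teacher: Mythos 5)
Your proof is correct, and while the first half coincides with the paper's, the second half takes a genuinely different route. The submultiplicativity check for $b_n = 1 + \sum_{i=1}^n a_i$ is the same computation as in the paper (the paper writes it as a chain of inequalities ending in $\bigl(1+\sum_{i=1}^n a_i\bigr)\bigl(1+\sum_{i=1}^m a_i\bigr)$, you verify $b_nb_m - b_{n+m}\geq 0$ by expanding; both hinge on $a_{n+l}\leq a_n a_l$ and $a_n \leq \sum_{i\leq n}a_i$), and both proofs then invoke Fekete for convergence. For the upper bound, however, the paper argues by a three-case analysis on the sign of $L=\lim\frac1n\log a_n$, bounding $\sum_{i\leq n}a_i$ by $n$ (when the terms are eventually $<1$) or by $na_n$ after passing to a monotone subsequence, with the case $L=0$ patched together from the other two; your uniform estimate $a_n\leq Ce^{n(h+\epsilon)}$ followed by a geometric-series bound and $\epsilon\to 0^+$ handles all regimes at once and avoids the paper's slightly informal subsequence manipulations (replacing $a_n$ by a subsequence changes the partial sums, so the paper's phrasing really means that only finitely many terms misbehave, which is exactly what your constant $C$ absorbs). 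The one caveat is that your bound $a_n\leq Ce^{n(h+\epsilon)}$ presupposes $h$ finite, whereas Fekete only guarantees $h\in[-\infty,\infty)$; if $h=-\infty$ you should simply note that $a_n\to 0$, so $a_n\leq C$ and the linear-growth bound gives the limit $\leq 0$ directly. This is a trivial repair (and the paper's own Case 1 is no more careful about it), so it does not affect the validity of your argument.
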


\begin{proof}
The sequence $\{1 + \sum_{i=1}^n a_i\}_{n=1}^\infty$ is submultiplicative.
Indeed,
\begin{equation*}
1 + \sum_{i=1}^{n+m} a_i \leq 1 + \sum_{i=1}^n a_i + \sum_{i=n+1}^{n+m} a_i \leq 1 + \sum_{i=1}^n a_i + a_n \sum_{i=1}^m a_i \leq \left(1 + \sum_{i=1}^n a_i\right) \left(1 + \sum_{i=1}^m a_i\right).
\end{equation*}
Thus the sequence $\{\frac{1}{n} \log (1 + \sum_{i=1}^n a_i)\}_{n=1}^\infty$ converges.

Now let
\begin{equation*}
L = \lim_{n \to \infty} \frac{1}{n} \log a_n = \inf_{n \geq 1} \frac{1}{n} \log a_n.
\end{equation*}

(Case 1)
Suppose $L < 0$.
By taking a subsequence if necessary, we can assume that $a_n < 1$ for all $n$.
Then
\begin{equation*}
\lim_{n \to \infty} \frac{1}{n} \log \left(1 + \sum_{i=1}^n a_i\right) \leq \lim_{n \to \infty} \frac{1}{n} \log (1 + n) = 0.
\end{equation*}

(Case 2)
Suppose $L > 0$.
By definition, we have $a_n \geq e^{Ln}$ and hence $a_n \to \infty$ as $n \to \infty$.
Therefore we can take an increasing subsequence which we denote by the same notation $\{a_n\}_{n=1}^\infty$.
Then
\begin{equation*}
\lim_{n \to \infty} \frac{1}{n} \log \left(1 + \sum_{i=1}^n a_i\right) \leq \lim_{n \to \infty} \frac{1}{n} \log (1 + na_n) = \lim_{n \to \infty} \frac{1}{n} \log a_n.
\end{equation*}

(Case 3)
Suppose $L = 0$.
By definition, we have $a_n \geq 1$ for all $n$.
If the sequence $\{a_n\}$ is bounded, then we can argue as in (Case 1).
If the sequence $\{a_n\}$ is unbounded, then we can take an increasing subsequence.
A similar argument as in (Case 2) then shows the assertion.
\end{proof}

\begin{proof}[Proof of Theorem \ref{thm1.6}]
Let $G,G'$ be split-generators of $\mathcal{D}$ such that $RG,RG'$ are split-generators of $\mathcal{C}$.
Applying $T_S^{n-1}$ to the exact triangle defining the twist functor $T_S$, we obtain
\begin{equation*}
T_S^{n-1}G \to T_S^nG \to T_S^{n-1}SRG[1] \to T_S^{n-1}G[1]
\end{equation*}
and therefore
\begin{align*}
\delta_t(G',T_S^nG)
&\leq \delta_t(G',T_S^{n-1}G) + \delta_t(G',T_S^{n-1}SRG[1])\\
&= \delta_t(G',T_S^{n-1}G) + \delta_t(G',S(C_S[2])^{n-1}RG[1])\\
&\leq \delta_t(G',T_S^{n-1}G) + (1 + \delta_t(G',T_SG'[-1])) \delta_t(RG',(C_S[2])^{n-1}RG[1])
\end{align*}
where the first inequality follows from Lemma \ref{lem3.1}, the second equality follows from Lemma \ref{lem2.9} (1) and the last inequality follows from Corollary \ref{cor3.3}.
Continuing this process, we get
\begin{align*}
\delta_t(G',T_S^nG)
&\leq \delta_t(G',G) + (1 + \delta_t(G',T_SG'[-1])) \sum_{i=0}^{n-1} \delta_t(RG',(C_S[2])^iRG[1])\\
&\leq M_t\left(1 + \sum_{i=0}^{n-1} \delta_t(RG',(C_S[2])^iRG[1])\right)
\end{align*}
where $M_t = \max\{\delta_t(G',G),1 + \delta_t(G',T_SG'[-1])\}$ which is independent of $n$.

Consequently, we have
\begin{align*}
h_t(T_S)
&= \lim_{n \to \infty} \frac{1}{n} \log \delta_t(G',T_S^nG)\\
&\leq \lim_{n \to \infty} \frac{1}{n} \log \left(M_t\left(1 + \sum_{i=0}^{n-1} \delta_t(RG',(C_S[2])^iRG[1])\right)\right)\\
&= \lim_{n \to \infty} \frac{1}{n} \log \left(1 + \sum_{i=0}^{n-1} \delta_t(RG',(C_S[2])^iRG[1])\right)\\
&\leq
\begin{cases}
0 & (\text{if } h_t(C_S[2]) \leq 0),\\
\displaystyle \lim_{n \to \infty} \frac{1}{n} \log \delta_t(RG',(C_S[2])^nRG[1]) = h_t(C_S[2]) & (\text{if } h_t(C_S[2]) \geq 0).
\end{cases}
\end{align*}
Here Lemma \ref{lem3.4} is used in the last inequality.
\end{proof}

\section{Application to Gromov--Yomdin type formula}\label{sec4}

Let $\mathcal{D}$ be a triangulated category.
The {\em Grothendieck group} $K(\mathcal{D})$ of $\mathcal{D}$ is defined as
\begin{equation*}
K(\mathcal{D}) = \mathbb{Z}\langle\mathrm{Ob}(\mathcal{D})\rangle/\langle E-F+G \,|\, E \to F \to G \to E[1]\rangle.
\end{equation*}
Now suppose $\mathcal{D}$ is of {\em finite type}, i.e.,
\begin{equation*}
\dim\mathrm{Hom}_\mathcal{D}^*(E,F) = \sum_{i \in \mathbb{Z}} \dim\mathrm{Hom}_\mathcal{D}(E,F[i]) < \infty
\end{equation*}
for any $E,F \in \mathrm{Ob}(\mathcal{D})$.
Then the {\em Euler form} $\chi : K(\mathcal{D}) \times K(\mathcal{D}) \to \mathbb{Z}$ given by
\begin{equation*}
\chi([E],[F]) = \sum_{i \in \mathbb{Z}} (-1)^i \dim\mathrm{Hom}_\mathcal{D}(E,F[i])
\end{equation*}
is well-defined.
We define the {\em numerical Grothendieck group} $\mathcal{N}(\mathcal{D})$ as
\begin{equation*}
\mathcal{N}(\mathcal{D}) = K(\mathcal{D})/\langle [E] \in K(\mathcal{D}) \,|\, \chi([E],-) = 0 \rangle.
\end{equation*}
In this section, we only consider triangulated categories whose numerical Grothendieck groups are of finite rank.

Now let $\Phi : \mathcal{C} \to \mathcal{D}$ be an exact functor between triangulated categories admitting a right adjoint functor.
Then the induced homomorphism $[\Phi] : K(\mathcal{C}) \to K(\mathcal{D})$ descends to the numerical Grothendieck groups.
Tensoring with $\mathbb{C}$, we obtain the homomorphism
\begin{equation*}
[\Phi] : \mathcal{N}(\mathcal{C}) \otimes_\mathbb{Z} \mathbb{C} \to \mathcal{N}(\mathcal{D}) \otimes_\mathbb{Z} \mathbb{C}
\end{equation*}
which we denote by the same notation.

\begin{proof}[Proof of Proposition \ref{prop1.9}]
By Theorem \ref{thm1.6} and the assumption,
\begin{equation*}
h_0(T_S) \leq h_0(C_S) \leq \log\rho([C_S]).
\end{equation*}
Now take an element $v \in \mathcal{N}(\mathcal{C}) \otimes_\mathbb{Z} \mathbb{C}$ such that $[S]v \neq 0$ and $[C_S]v = \lambda v$ where $|\lambda| = \rho([C_S])$ which exists by our assumption.
Then, by Lemma \ref{lem2.9} (1),
\begin{equation*}
[T_S][S]v = [T_SS]v = [SC_S]v = [S][C_S]v = \lambda [S]v.
\end{equation*}
This shows that $\rho([C_S]) \leq \rho([T_S])$ and therefore
\begin{equation*}
h_0(T_S) \leq \log\rho([T_S])
\end{equation*}
as desired.
\end{proof}

Recall that a dg algebra $A$ is called {\em smooth} if $A$ is perfect as a dg bimodule over itself and {\em proper} if $A$ has the finite dimensional total cohomology.

\begin{thm}[\cite{KST}, Theorem 2.13]\label{thm4.1}
Let $\mathcal{D}$ be the perfect derived category of a smooth proper dg algebra and $\Phi : \mathcal{D} \to \mathcal{D}$ be an exact endofunctor admitting a right adjoint functor.
Then
\begin{equation*}
h_0(\Phi) \geq \log\rho([\Phi]).
\end{equation*}
\end{thm}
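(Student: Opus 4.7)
The plan is to control $\delta_0(G, \Phi^n G)$ from below via spectral data of $[\Phi]$ acting on $\mathcal{N}(\mathcal{D}) \otimes \mathbb{C}$, transported through the Euler pairing. The two key ingredients are a Hom-counting inequality valid for any cone decomposition and the non-degeneracy of $\chi$ on $\mathcal{N}(\mathcal{D})$, both of which crucially use smoothness and properness of the underlying dg algebra.

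First, for any split-generator $G$ and objects $E, F \in \mathcal{D}$, I would prove
\begin{equation*}
\sum_i \dim \mathrm{Hom}_\mathcal{D}(E, F[i]) \;\leq\; C(E, G) \cdot \delta_0(G, F), \qquad C(E, G) := \sum_i \dim \mathrm{Hom}_\mathcal{D}(E, G[i]),
\end{equation*}
where $C(E, G)$ is finite by properness. Given any cone decomposition of $F \oplus F'$ with $k$ components $G[n_1], \dots, G[n_k]$, induction on $k$ using the Hom long exact sequence yields $\sum_i \dim \mathrm{Hom}(E, (F \oplus F')[i]) \leq k \cdot C(E, G)$, and taking the infimum over all such decompositions gives the claim. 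Combining with $|\chi([E], [F])| \leq \sum_i \dim \mathrm{Hom}_\mathcal{D}(E, F[i])$, submultiplicativity (Lemma \ref{lem2.2}~(1)), and functoriality (Lemma \ref{lem2.2}~(3)), this gives
\begin{equation*}
|\chi([E], [\Phi^n F])| \;\leq\; C(E, G) \cdot \delta_0(G, F) \cdot \delta_0(G, \Phi^n G).
\end{equation*}

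For the spectral side, I would choose a $\mathbb{Q}$-basis $[E_1], \dots, [E_r]$ of $\mathcal{N}(\mathcal{D}) \otimes \mathbb{Q}$ represented by actual objects, possible since the numerical Grothendieck group is generated by object classes. Let $A = (\chi([E_i], [E_j]))$ and let $B_n$ be the matrix of $[\Phi]^n$ in this basis. Smoothness and properness make $\chi$ non-degenerate, so $A$ is invertible; setting $M_n := (\chi([E_i], [\Phi^n E_j])) = A B_n$ gives $\|M_n\| \geq \|B_n\| / \|A^{-1}\|$ in any matrix norm. The Jordan decomposition of $[\Phi]$ yields $\|B_n\| \geq c \cdot \rho([\Phi])^n / n^d$ for constants $c > 0$, $d \geq 0$ (assuming $\rho([\Phi]) > 0$; otherwise the theorem is trivial). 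Equivalence of norms on the finite-dimensional space of $r \times r$ matrices then provides some pair $(i, j)$ with $|\chi([E_i], [\Phi^n E_j])| \geq c' \cdot \rho([\Phi])^n / n^d$.

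Combining the upper and lower bounds gives $c' \cdot \rho([\Phi])^n / n^d \leq C'' \cdot \delta_0(G, \Phi^n G)$ for some uniform constant $C''$, and taking $\tfrac{1}{n} \log$ with $n \to \infty$ yields $h_0(\Phi) \geq \log \rho([\Phi])$. The main obstacle is the non-degeneracy of $\chi$: without the smooth-proper hypothesis this fails, and then the numerical growth of $[\Phi]^n$ cannot be detected by Euler pairings of object classes, breaking the bridge to $\delta_0$.
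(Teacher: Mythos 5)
The paper does not prove this statement itself: Theorem \ref{thm4.1} is imported verbatim from Kikuta--Shiraishi--Takahashi \cite{KST} and used as a black box, so there is no in-paper proof to compare against. Your argument is correct and is essentially the standard proof from \cite{KST}: the Hom-counting bound $\sum_i \dim\mathrm{Hom}(E,F[i]) \leq C(E,G)\,\delta_0(G,F)$ obtained by induction over cone decompositions, the non-degeneracy of the Euler pairing on $\mathcal{N}(\mathcal{D})$ for smooth proper dg algebras, and the linear-algebra step detecting $\rho([\Phi])^n$ in the matrix of pairings $\chi([E_i],[\Phi^n E_j])$ (note every class in $K(\mathcal{D})$ is the class of an honest object since $[E]-[F]=[E\oplus F[1]]$, so your choice of basis representatives is legitimate). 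The only simplification worth noting is that you do not need the Jordan decomposition: any submultiplicative matrix norm satisfies $\|B_1^n\| \geq \rho(B_1)^n$, so the polynomial correction $n^{-d}$ can be dropped.
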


\begin{proof}[Proof of Corollary \ref{cor1.10}]
It follows directly from Proposition \ref{prop1.9} and Theorem \ref{thm4.1}.
\end{proof}

\section{Examples}\label{sec5}

\subsection{Spherical objects}

Spherical objects and twists were introduced by Seidel--Thomas \cite{ST} as the mirror counterparts of Lagrangian spheres and the Dehn twists along them.

\begin{dfn}[\cite{ST}, Definition 2.9]
An object $E$ of a triangulated category $\mathcal{D}$ of finite type is called a {\em $d$-spherical object} ($d > 0$) if it satisfies the following conditions:
\begin{itemize}
\item[(1)] $\mathrm{Hom}_\mathcal{D}^*(E,E) \cong \mathbb{K}[h]/(h^2)$ where $\deg h = d$.
\item[(2)] There is a functorial isomorphism $\mathrm{Hom}_\mathcal{D}^*(E,-) \cong \mathrm{Hom}_\mathcal{D}^*(-,E[d])^\vee$.
\end{itemize}
\end{dfn}

\begin{thm}[\cite{ST}, Proposition 2.10]
Let $E$ be a $d$-spherical object of a triangulated category $\mathcal{D}$ of finite type.
Then there is an exact autoequivalence $T^\mathbb{S}_E$ of $\mathcal{D}$, called the {\em spherical twist} along $E$, which acts on objects by
\begin{equation*}
T^\mathbb{S}_EF = \mathrm{Cone}(\mathrm{Hom}_\mathcal{D}^*(E,F) \otimes E \xrightarrow{\mathrm{ev}} F).
\end{equation*}
\end{thm}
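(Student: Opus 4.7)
The plan is to realize $E$ as the image of a spherical functor and then deduce the claim from the general machinery developed in Section \ref{sec2}. Using the dg enhancements tacitly assumed in the paper's conventions, I would set $\mathcal{C} = \mathrm{Perf}(\mathbb{K})$ and define an exact functor $S : \mathcal{C} \to \mathcal{D}$ by $S(V) = V \otimes_\mathbb{K} E$. Since $\mathcal{D}$ is of finite type, the totalized Hom complex assembles into a right adjoint $R(F) = \mathrm{Hom}^*_\mathcal{D}(E,F)$. Condition (2) in the definition of a $d$-spherical object yields a natural isomorphism $\mathrm{Hom}^*_\mathcal{D}(F,E)^\vee \cong \mathrm{Hom}^*_\mathcal{D}(E,F)[d]$, from which it follows that $L := R[d]$ serves as a left adjoint to $S$.

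Next I would verify two of the four conditions in Definition \ref{dfn2.6}. Condition (1) of being $d$-spherical gives $RS(V) \cong V \otimes_\mathbb{K} \mathbb{K}[h]/(h^2)$ with $\deg h = d$, and the unit of adjunction $V \to RS(V)$ is identified with $v \mapsto v \otimes 1$, whose cone is naturally isomorphic to $V[-d]$. Hence $C_S \cong [-d-1]$, which is manifestly an exact autoequivalence, verifying Definition \ref{dfn2.6}(2). For Definition \ref{dfn2.6}(4) one computes directly
\begin{equation*}
C_S L[1] \cong [-d-1] \circ R[d] \circ [1] \cong R.
\end{equation*}
Theorem 2.8 (``any two of the four conditions imply the other two'') then upgrades these to the full set, so $S$ is a spherical functor.

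At this point Definition \ref{dfn2.6}(1) immediately asserts that $T_S = \mathrm{Cone}(SR \xrightarrow{\varepsilon} \mathrm{Id}_\mathcal{D})$ is an exact autoequivalence of $\mathcal{D}$. Evaluating at an object $F$ and unwinding the construction of $S$ and $R$, this cone is
\begin{equation*}
T_S(F) = \mathrm{Cone}\bigl(\mathrm{Hom}^*_\mathcal{D}(E,F) \otimes_\mathbb{K} E \xrightarrow{\mathrm{ev}} F\bigr),
\end{equation*}
which is exactly the prescribed formula for $T^\mathbb{S}_E$.

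The main obstacle is the underlying functoriality issue: to form the cones of the unit $\eta$ and counit $\varepsilon$ as honest exact functors rather than pointwise cones, one must lift $S$, $R$ and the (co)units to the dg level, so that the evaluation map $\mathrm{Hom}^*_\mathcal{D}(E,F) \otimes_\mathbb{K} E \to F$ and the unit $\mathbb{K} \to \mathrm{Hom}^*_\mathcal{D}(E,E)$ become genuine morphisms of dg bimodules. This is precisely the foundational content subsumed by the paper's blanket assumption that dg enhancements and dg lifts exist; once that is granted, the shift computations above are essentially mechanical.
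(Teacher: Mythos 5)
The paper never proves this statement itself: it is quoted from Seidel--Thomas \cite{ST}, where $T^\mathbb{S}_E$ is constructed directly and shown to be an equivalence by hand (adjoints plus a spanning-class argument). Your route --- realizing $E$ as the spherical functor $S=-\otimes_{\mathbb{K}}E$ from perfect $\mathbb{K}$-modules and extracting the twist from the Anno--Logvinenko formalism --- is exactly the point of view the paper adopts in the sentences following the statement, and your identifications $R(F)=\mathrm{Hom}^*_\mathcal{D}(E,F)$, $L\cong R[d]$, $RS(V)\cong V\oplus V[-d]$, $C_S\cong[-d-1]$, and $T_SF=\mathrm{Cone}(\mathrm{Hom}^*_\mathcal{D}(E,F)\otimes E\xrightarrow{\mathrm{ev}}F)$ are all correct.

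The genuine gap is in your verification of Definition \ref{dfn2.6}(4). In \cite{AL2} (the source of Theorem 2.8) conditions (3) and (4) are not the existence of \emph{some} isomorphism of functors: they require that specific canonical natural transformations, built from the adjunction units/counits and the triangle defining $C_S$ (for (4), the composite $R\to RSL\to C_SL[1]$), be isomorphisms; the paper's Definition \ref{dfn2.6} suppresses the maps. Your computation $C_SL[1]\cong[-d-1]\circ R[d]\circ[1]\cong R$ only exhibits an abstract isomorphism by juggling shifts, so as written you cannot invoke the two-out-of-four theorem --- and this canonical-map check is precisely where the content of ``spherical object implies spherical functor'' lives. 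The repair is not hard: by a Yoneda argument, any functorial isomorphism as in condition (2) of a $d$-spherical object has the form $f\mapsto t(-\circ f)$ for a linear functional $t$ on $\mathrm{Hom}^*_\mathcal{D}(E,E[d])$, so condition (2) says the composition pairing $\mathrm{Hom}^*_\mathcal{D}(E,F)\otimes\mathrm{Hom}^*_\mathcal{D}(F,E[d])\to\mathrm{Hom}^*_\mathcal{D}(E,E[d])\xrightarrow{t}\mathbb{K}$ is perfect for every $F$; if you define the adjunction $L\dashv S$ through this pairing (rather than declaring $L:=R[d]$ by fiat), the canonical map $R\to C_SL[1]$ becomes objectwise the duality isomorphism and is invertible. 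With that step added (and the dg-lift caveats you already acknowledge), your argument goes through and gives a clean alternative to the direct proof in \cite{ST}.
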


Let $E$ be a $d$-spherical object of a triangulated category $\mathcal{D}$ of finite type.
Let us regard $\mathbb{K}$ as a dg algebra concentrated in degree 0 (so with zero differential).
Denote by $D_{fd}(\mathbb{K})$ the derived category of dg $\mathbb{K}$-modules with finite dimensional cohomology (which is equivalent to the bounded derived category of finite dimensional vector spaces over $\mathbb{K}$).
Then the functor
\begin{equation*}
S = - \otimes_\mathbb{K} E : D_{fd}(\mathbb{K}) \to \mathcal{D}
\end{equation*}
is a spherical functor and the twist and cotwist functors along it can be written as
\begin{equation*}
 T_S \cong T^\mathbb{S}_E \text{ and } C_S \cong [-1-d].
\end{equation*}
Clearly the essential image of the right adjoint functor of $S$ contains a split-generator of $D_{fd}(\mathbb{K})$ (or alternatively, we can apply Corollary \ref{cor5.13}).
Thus, since
\begin{equation*}
h_t(C_S[2]) = h_t([1-d]) = (1-d)t,
\end{equation*}
we recover Ouchi's result \cite[Theorem 3.1]{Ouc} (see Theorem \ref{thm1.3}) from Theorems \ref{thm1.6} and \ref{thm1.7}.

\begin{prop}
Let $E$ be a $d$-spherical object of a triangulated category $\mathcal{D}$ of finite type admitting a split-generator.
Then
\begin{equation*}
(1-d)t \leq h_t(T^\mathbb{S}_E) \leq
\begin{cases}
0 & (\text{if } t \geq 0),\\
(1-d)t & (\text{if } t \leq 0).
\end{cases}
\end{equation*}
If moreover $E^\perp \neq 0$ then $h_t(T^\mathbb{S}_E) = 0$ for all $t \geq 0$.
\end{prop}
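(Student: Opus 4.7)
The plan is to apply Theorems \ref{thm1.6} and \ref{thm1.7} directly to the spherical functor $S = - \otimes_\mathbb{K} E : D_{fd}(\mathbb{K}) \to \mathcal{D}$ identified in the paragraph preceding the proposition, so almost all of the work is already done and only bookkeeping remains.

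First I would record the relevant identifications: $T_S \cong T^\mathbb{S}_E$ and $C_S \cong [-1-d]$, so $C_S[2] \cong [1-d]$. A direct complexity computation on the split-generator $\mathbb{K}$ of $D_{fd}(\mathbb{K})$ gives $h_t(C_S[2]) = h_t([1-d]) = (1-d)t$. I would also note that the right adjoint $R = \mathrm{RHom}_\mathcal{D}(E,-)$ sends $E$ to $\mathrm{Hom}_\mathcal{D}^*(E,E) \cong \mathbb{K}[h]/(h^2)$, which contains $\mathbb{K}$ as a direct summand and hence the essential image of $R$ contains the split-generator $\mathbb{K}$ of $D_{fd}(\mathbb{K})$ (alternatively, by invoking Corollary \ref{cor5.13} as the text suggests).

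Next I would split into cases by the sign of $t$, using $d>0$ so that $1-d \leq 0$. When $t \geq 0$ we have $h_t(C_S[2]) = (1-d)t \leq 0$, so Theorem \ref{thm1.6} gives $h_t(T^\mathbb{S}_E) \leq 0$; when $t \leq 0$ we have $h_t(C_S[2]) \geq 0$, so Theorem \ref{thm1.6} gives $h_t(T^\mathbb{S}_E) \leq (1-d)t$. The lower bound is uniform: Theorem \ref{thm1.7}(1) (whose hypothesis is satisfied by the previous step) yields $h_t(T^\mathbb{S}_E) \geq h_t(C_S[2]) = (1-d)t$ for all $t \in \mathbb{R}$. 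Together these produce the displayed two-sided bound.

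For the final clause, I would show that the condition $E^\perp \neq 0$ translates into the hypothesis of Theorem \ref{thm1.7}(2). If $F \neq 0$ satisfies $\mathrm{Hom}_\mathcal{D}^*(E,F) = 0$, then $R(F) = \mathrm{RHom}_\mathcal{D}(E,F) = 0$, so $SR(F) = 0$, i.e., $F \in \mathrm{Ker}\, SR$ and $\mathrm{Ker}\, SR \neq 0$. Then Theorem \ref{thm1.7}(2) gives $h_t(T^\mathbb{S}_E) \geq 0$, which combined with the upper bound $h_t(T^\mathbb{S}_E) \leq 0$ established above for $t \geq 0$ forces $h_t(T^\mathbb{S}_E) = 0$ on that range.

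There is no serious obstacle here since everything reduces to the general theorems; the only genuinely content-bearing verifications are the identification of $C_S$ with $[-1-d]$ (already asserted in the excerpt) and the implication $E^\perp \neq 0 \Rightarrow \mathrm{Ker}\, SR \neq 0$, both of which are immediate from the explicit form of $R$.
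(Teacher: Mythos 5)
Your proposal is correct and takes essentially the same route as the paper: identify the spherical functor $S = - \otimes_\mathbb{K} E$, use $T_S \cong T^\mathbb{S}_E$ and $C_S \cong [-1-d]$ so that $h_t(C_S[2]) = (1-d)t$, check the split-generator hypothesis on the image of $R$, and apply Theorems \ref{thm1.6} and \ref{thm1.7} (with $E^\perp \subset \mathrm{Ker}\,SR$ for the last clause). One cosmetic remark: the essential image of $R$ need not contain $\mathbb{K}$ itself, but it contains $RE \cong \mathbb{K} \oplus \mathbb{K}[-d]$, which is already a split-generator of $D_{fd}(\mathbb{K})$, so the hypothesis is satisfied exactly as you intend.
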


\subsection{$\mathbb{P}$-objects}

Similarly to spherical objects and twists, $\mathbb{P}$-objects and twists, introduced by Huybrechts--Thomas \cite{HT}, can be considered as the mirror counterparts of Lagrangian projective spaces and the Dehn twists along them.

\begin{dfn}[\cite{HT}, Definition 1.1]
An object $E$ of a triangulated category $\mathcal{D}$ of finite type is called a {\em $\mathbb{P}^d$-object} ($d > 0$) if it satisfies the following conditions:
\begin{itemize}
\item[(1)] $\mathrm{Hom}_\mathcal{D}^*(E,E) \cong \mathbb{K}[h]/(h^{d+1})$ where $\deg h = 2$.
\item[(2)] There is a functorial isomorphism $\mathrm{Hom}_\mathcal{D}^*(E,-) \cong \mathrm{Hom}_\mathcal{D}^*(-,E[2d])^\vee$.
\end{itemize}
\end{dfn}

\begin{thm}[\cite{HT}, Proposition 2.6]
Let $E$ be a $\mathbb{P}^d$-object of a triangulated category $\mathcal{D}$ of finite type.
Then there is an exact autoequivalence $T^\mathbb{P}_E$ of $\mathcal{D}$, called the {\em $\mathbb{P}$-twist} along $E$, which acts on objects by
\begin{equation*}
T^\mathbb{P}_EF = \mathrm{Cone}(\mathrm{Cone}(\mathrm{Hom}_\mathcal{D}^*(E,F) \otimes E[-2] \xrightarrow{h^* \otimes \mathrm{id} - \mathrm{id} \otimes h} \mathrm{Hom}_\mathcal{D}^*(E,F) \otimes E) \xrightarrow{\mathrm{ev}} F)
\end{equation*}
where $h^* : \mathrm{Hom}_\mathcal{D}^*(E,F) \to \mathrm{Hom}_\mathcal{D}^*(E,F)[2]$ is the homomorphism given by precomposing with $h \in \mathrm{Hom}_\mathcal{D}(E,E[2])$.
\end{thm}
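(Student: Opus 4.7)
The plan is to verify directly that $T^\mathbb{P}_E$ is an exact autoequivalence, via a spanning-class argument in the spirit of Seidel--Thomas' treatment of $T^\mathbb{S}_E$. Exactness is immediate: working in a dg enhancement of $\mathcal{D}$ as stipulated in the conventions, the two nested cones in the definition of $T^\mathbb{P}_E F$ are functorial in $F$, so $T^\mathbb{P}_E$ descends to an exact endofunctor of $\mathcal{D}$.

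To show that $T^\mathbb{P}_E$ is an autoequivalence, I would check three things. First, for any $F \in E^\perp$ one has $\mathrm{Hom}_\mathcal{D}^*(E,F) = 0$, so both terms in the inner cone vanish and $T^\mathbb{P}_E F \cong F$; hence $T^\mathbb{P}_E$ restricts to the identity on $E^\perp$. Second, writing $A := \mathrm{Hom}_\mathcal{D}^*(E,E) \cong \mathbb{K}[h]/(h^{d+1})$, I would compute $T^\mathbb{P}_E E$ by unwinding the map $h^* \otimes \mathrm{id} - \mathrm{id} \otimes h$ between $A \otimes E[-2]$ and $A \otimes E$ on the monomial basis $\{h^i\}_{i=0}^d$; the inner cone should telescope to an explicit shift of $E$, and the outer cone with the evaluation $A \otimes E \to E$ then identifies $T^\mathbb{P}_E E$ with $E[-2d]$. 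Third, by condition (2) of the $\mathbb{P}^d$-object definition, $\{E\} \cup E^\perp$ forms a spanning class of $\mathcal{D}$.

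Granted these, $T^\mathbb{P}_E$ is fully faithful by checking the induced maps on Hom spaces between pairs drawn from the spanning class: the statement is trivial for pairs in $E^\perp$, immediate for mixed pairs because $T^\mathbb{P}_E$ acts as the identity on $E^\perp$, and reduces to an Ext computation in $A$ for $(E,E)$. A Bondal--Orlov type spanning-class argument then upgrades fully faithful to an autoequivalence, using the Calabi--Yau-like duality built into condition (2).

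The main obstacle is the second step: tracking the precise action of $h^* \otimes \mathrm{id} - \mathrm{id} \otimes h$ on $A \otimes E$ and identifying the resulting inner cone up to quasi-isomorphism requires careful bookkeeping of degrees and of the two $A$-actions, but it is a purely algebraic computation once the differential is written out on the standard basis. The remaining spanning-class argument is then a formal consequence of having a spanning class on which the functor acts by a shift and an identity.
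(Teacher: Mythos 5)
This theorem is not proved in the paper at all: it is imported verbatim from Huybrechts--Thomas \cite{HT}, and the paper's own route to the $\mathbb{P}$-twist is the spherical-functor one of Section 5.2 (the functor $S=-\otimes_{\mathbb{K}[h]}E : D_{fd}(\mathbb{K}[h])\to\mathcal{D}$ is spherical and $T_S\cong T^\mathbb{P}_E$, $C_S\cong[-2-2d]$ by \cite{Seg},\cite{HK}, so that $T^\mathbb{P}_E$ is an autoequivalence for free). Your outline is instead a reconstruction of the original \cite{HT} argument (spanning class $\{E\}\cup E^\perp$, compute the action on it, then fully faithful $\Rightarrow$ equivalence), which is a legitimate route, but two of its steps are genuinely harder than you make them sound. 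First, the construction itself: \textquote{exactness is immediate} is not. To write the inner map you must choose a cochain representative of $h$, and to form the outer cone you must choose a null-homotopy exhibiting the vanishing of $\mathrm{ev}\circ(h^*\otimes\mathrm{id}-\mathrm{id}\otimes h)$; that these choices can be made coherently and functorially in $F$ is not a formal consequence of the existence of a dg enhancement --- it is exactly the formality issue for $\mathbb{P}$-objects, which is why \cite{HK} is cited here. In \cite{HT} this is sidestepped because the twist is built as a Fourier--Mukai transform with an explicit kernel on $X\times X$.

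Second, the fully-faithfulness step is understated. The spanning-class criterion requires the \emph{induced maps} $\mathrm{Hom}^*_\mathcal{D}(\omega_1,\omega_2)\to\mathrm{Hom}^*_\mathcal{D}(T^\mathbb{P}_E\omega_1,T^\mathbb{P}_E\omega_2)$ to be bijective; the object-level isomorphisms $T^\mathbb{P}_EE\cong E[-2d]$ and $T^\mathbb{P}_EF\cong F$ do not give this by themselves. The pair $(E,E)$ is the technical heart of the Huybrechts--Thomas proof (a chase through both defining triangles), and it is precisely where the correction term $h^*\otimes\mathrm{id}$ is indispensable --- with $\mathrm{id}\otimes h$ alone one still gets a functor with plausible-looking object computations on $E^\perp$, but not an equivalence. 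For the mixed pairs you also need $\mathrm{Hom}^*_\mathcal{D}(F,E)=0$, which uses condition (2) and not merely $F\in E^\perp$. Two smaller points: the inner cone at $F=E$ is not a single shift of $E$ but $E\oplus E[-2d-1]$ (the connecting map is $\pm h^{d+1}=0$), after which the outer cone indeed gives $E[-2d]$; and the upgrade from fully faithful to equivalence needs $T^\mathbb{P}_E$ to admit a right adjoint (automatic in the Fourier--Mukai setting of \cite{HT}, an extra input in the abstract dg setting) --- granted that, the argument is fine, since any $Y$ right-orthogonal to the image satisfies $\mathrm{Hom}^*_\mathcal{D}(E,Y)=0$, hence $T^\mathbb{P}_EY\cong Y$ and then $\mathrm{Hom}_\mathcal{D}(Y,Y)=0$ forces $Y\cong0$.
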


\begin{rmk}
An object $E$ is a $\mathbb{P}^1$-object if and only if it is a 2-spherical object.
In that case, there is an isomorphism $T^\mathbb{P}_E \cong (T^\mathbb{S}_E)^2$ \cite[Proposition 2.9]{HT}.
\end{rmk}

Let $E$ be a $\mathbb{P}^d$-object of a triangulated category $\mathcal{D}$ of finite type.
Consider the dg algebra $\mathbb{K}[h]$ with $\deg h = 2$ (so with zero differential).
Denote by $D_{fd}(\mathbb{K}[h])$ the derived category of dg $\mathbb{K}[h]$-modules with finite dimensional cohomology.
Then the functor
\begin{equation*}
S = - \otimes_{\mathbb{K}[h]} E : D_{fd}(\mathbb{K}[h]) \to \mathcal{D}
\end{equation*}
is a spherical functor and the twist and cotwist functors along it can be written as
\begin{equation*}
 T_S \cong T^\mathbb{P}_E \text{ and } C_S \cong [-2-2d]
\end{equation*}
\cite[Proposition 4.2]{Seg},\cite[Corollary 2.9]{HK}.
Clearly the essential image of the right adjoint functor of $S$ contains a split-generator of $D_{fd}(\mathbb{K}[h])$ (or alternatively, we can apply Corollary \ref{cor5.13}).
Thus, since
\begin{equation*}
h_t(C_S[2]) = h_t([-2d]) = -2dt,
\end{equation*}
we recover Fan's result \cite[Theorem 3.1]{Fan2} (see Theorem \ref{thm1.5}) from Theorems \ref{thm1.6} and \ref{thm1.7}.

\begin{prop}
Let $E$ be a $\mathbb{P}^d$-object of a triangulated category $\mathcal{D}$ of finite type admitting a split-generator.
Then
\begin{equation*}
-2dt \leq h_t(T^\mathbb{P}_E) \leq
\begin{cases}
0 & (\text{if } t \geq 0),\\
-2dt & (\text{if } t \leq 0).
\end{cases}
\end{equation*}
If moreover $E^\perp \neq 0$ then $h_t(T^\mathbb{P}_E) = 0$ for all $t \geq 0$.
\end{prop}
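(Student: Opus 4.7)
The plan is to realize $E$ via the spherical functor $S = - \otimes_{\mathbb{K}[h]} E : D_{fd}(\mathbb{K}[h]) \to \mathcal{D}$ and then read off both bounds from Theorems \ref{thm1.6} and \ref{thm1.7}. All the structural identifications have already been set up in the paragraph preceding the proposition, so the proof is essentially a matter of assembling the pieces in the correct order.

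First I would record the three facts we need about $S$: that $T_S \cong T^\mathbb{P}_E$ and $C_S \cong [-2-2d]$ by \cite[Proposition 4.2]{Seg} and \cite[Corollary 2.9]{HK}, and that the essential image of the right adjoint $R$ contains a split-generator of $D_{fd}(\mathbb{K}[h])$ (the dg algebra $\mathbb{K}[h]$ itself is a split-generator, and one checks it lies in the image of $R$, or alternatively one invokes Corollary \ref{cor5.13}). From $C_S \cong [-2-2d]$ we immediately get $h_t(C_S[2]) = h_t([-2d]) = -2dt$.

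Next, I would apply Theorem \ref{thm1.6}: since $d > 0$, the inequality $-2dt \leq 0$ is equivalent to $t \geq 0$ and $-2dt \geq 0$ is equivalent to $t \leq 0$, so the two cases of Theorem \ref{thm1.6} match the two cases of the proposition and give the desired upper bound on $h_t(T^\mathbb{P}_E)$. The lower bound $h_t(T^\mathbb{P}_E) \geq -2dt$ is then exactly Theorem \ref{thm1.7}(1), using again that the image of $R$ contains a split-generator.

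Finally, for the statement under the hypothesis $E^\perp \neq 0$, I would pick a nonzero $F \in E^\perp$. Since the right adjoint $R$ of $S$ sends $F$ to the dg $\mathbb{K}[h]$-module computed from $\mathrm{Hom}_\mathcal{D}^*(E,F) = 0$, we have $RF = 0$, hence $F \in \mathrm{Ker}\, SR$ and $\mathrm{Ker}\, SR \neq 0$. Theorem \ref{thm1.7}(2) then yields $h_t(T^\mathbb{P}_E) \geq 0$ for all $t$, which combined with the upper bound $h_t(T^\mathbb{P}_E) \leq 0$ for $t \geq 0$ gives the equality $h_t(T^\mathbb{P}_E) = 0$ on $t \geq 0$. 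There is no real obstacle: the only substantive inputs, the identifications of $T_S$ and $C_S$, are quoted from the literature, and the rest is bookkeeping of the case split $-2dt \lessgtr 0$.
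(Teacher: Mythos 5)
Your proposal is correct and follows essentially the same route as the paper: realize $E$ via the spherical functor $S = -\otimes_{\mathbb{K}[h]} E$, quote $T_S \cong T^\mathbb{P}_E$ and $C_S \cong [-2-2d]$ from \cite{Seg} and \cite{HK}, note the image of $R$ contains a split-generator (or use Corollary \ref{cor5.13}), and apply Theorems \ref{thm1.6} and \ref{thm1.7} with $h_t(C_S[2]) = -2dt$. Your explicit check that $E^\perp \neq 0$ forces $\mathrm{Ker}\, SR \neq 0$ (since $\mathrm{Hom}^*_\mathcal{D}(E,F)=0$ gives $RF \cong 0$) is a detail the paper leaves implicit, and it is correct.
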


\subsection{Orthogonally spherical objects}

Spherical functors which can be expressed as Fourier--Mukai transforms have been studied by many authors: Horja \cite{Hor}, Toda \cite{Tod}, Anno--Logvinenko \cite{AL1} and many others.
In this section, we focus on orthogonally spherical objects and the twists along them introduced by Anno--Logvinenko \cite{AL1}.
In what follows, we only consider equidimensional schemes for simplicity and all functors appearing in this section are always assumed to be derived although we will not explicitly indicate that by the notation.

Let $X,Z$ be separated Gorenstein schemes of finite type over $\mathbb{C}$.
For a closed point $p \in Z$, denote the corresponding inclusion by $\iota_{p \times X} : X \to Z \times X$.
For a perfect object $\mathcal{E} \in \mathrm{Ob}(D^b(Z \times X))$, define its {\em fiber} at a closed point $p \in Z$ by $\mathcal{E}_p = \iota_{p \times X}^*\mathcal{E}$.
Let us also write the projections by $\pi_X : Z \times X \to X$ and $\pi_Z : Z \times X \to Z$.
We define an object $\mathcal{L}_\mathcal{E} \in \mathrm{Ob}(D^b(Z))$ to be the cone of the composition of the morphisms
\begin{equation*}
\mathcal{O}_Z \to \pi_{Z*}\mathcal{O}_{Z \times X} \to \pi_{Z*}\mathcal{H}om_{Z \times X}(\mathcal{E},\mathcal{E}) \to \pi_{Z*}\mathcal{H}om_{Z \times X}(\pi_X^*\pi_{X*}\mathcal{E},\mathcal{E})
\end{equation*}
where the first morphism is induced by the adjunction unit $\mathrm{Id}_{D^b(Z)} \to \pi_{Z*}\pi_Z^*$, the second morphism is induced by the adjunction unit $\mathrm{Id}_{D^b(Z \times X)} \to \mathcal{H}om_{Z \times X}(\mathcal{E},- \otimes \mathcal{E})$ and the third morphism is induced by the adjunction counit $\pi_X^*\pi_{X*} \to \mathrm{Id}_{D^b(X)}$.

\begin{dfn}[\cite{AL1}, Definition 3.4]\label{dfn5.8}
Let $X,Z$ be separated Gorenstein schemes of finite type over $\mathbb{C}$ with $\dim X > \dim Z$.
A perfect object $\mathcal{E} \in \mathrm{Ob}(D^b(Z \times X))$ is called an {\em orthogonally spherical object} if it satisfies the following conditions:
\begin{itemize}
\item[(1)] $\mathrm{Hom}_{D^b(X)}^*(\pi_{X*}\mathcal{E},\mathcal{E}_p) \cong \mathbb{C}[h]/(h^2)$ where $\deg h = \dim X - \dim Z$ for every closed point $p \in Z$.
\item[(2)] $\mathcal{E}^\vee \otimes \pi_X^!\mathcal{O}_X \cong \mathcal{E}^\vee \otimes \pi_Z^!\mathcal{L}_\mathcal{E}$.
\item[(3)] $\mathrm{Hom}_{D^b(X)}^*(\mathcal{E}_p,\mathcal{E}_q) = 0$ for every pair of distinct closed points $p,q \in Z$.
\end{itemize}
\end{dfn}

\begin{rmk}
The above definition is stronger than the original definition \cite[Definition 3.4]{AL1} where $\mathcal{E} \in \mathrm{Ob}(D^b(Z \times X))$ is called {\em spherical} if the Fourier--Mukai transform $\Phi_\mathcal{E}^{Z \to X} : D^b(Z) \to D^b(X)$ is a spherical functor and {\em orthogonal} if it satisfies Definition \ref{dfn5.8} (3).
The dimension restriction (i.e., $\dim X > \dim Z$) is included just for the sake of simplicity.
\end{rmk}

\begin{thm}[\cite{AL1}, Theorem 3.2 and Proposition 3.7]\label{thm5.10}
Let $X,Z$ be separated Gorenstein schemes of finite type over $\mathbb{C}$ and $\mathcal{E} \in \mathrm{Ob}(D^b(Z \times X))$ be an orthogonally spherical object.
Then the Fourier--Mukai transform $\Phi_\mathcal{E}^{Z \to X} : D^b(Z) \to D^b(X)$ with kernel $\mathcal{E}$ is a spherical functor.
Moreover, $\mathcal{L}_\mathcal{E}[\dim X - \dim Z]$ is isomorphic to an invertible sheaf on $Z$ and the cotwist functor along the spherical functor $\Phi_\mathcal{E}^{Z \to X}$ is isomorphic to $- \otimes \mathcal{L}_\mathcal{E}[-1] : D^b(Z) \to D^b(Z)$.
\end{thm}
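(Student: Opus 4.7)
The plan is to verify Definition \ref{dfn2.6} directly for $S = \Phi_\mathcal{E}^{Z \to X}$ and simultaneously identify the cotwist. By the standard Fourier--Mukai formalism on Gorenstein schemes, the right and left adjoints of $S$ are themselves Fourier--Mukai transforms, namely
\begin{equation*}
R = \Phi_{\mathcal{E}^\vee \otimes \pi_X^!\mathcal{O}_X}^{X \to Z}, \qquad L = \Phi_{\mathcal{E}^\vee \otimes \pi_Z^!\mathcal{O}_Z}^{X \to Z}.
\end{equation*}
First I would use condition (2) of Definition \ref{dfn5.8}, the isomorphism $\pi_Z^!\mathcal{L}_\mathcal{E} \cong \pi_Z^!\mathcal{O}_Z \otimes \pi_Z^*\mathcal{L}_\mathcal{E}$, and the projection formula to deduce a natural isomorphism $R(-) \cong L(-) \otimes \mathcal{L}_\mathcal{E}$. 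This already matches what Definition \ref{dfn2.6} (4) would predict if $C_S \cong - \otimes \mathcal{L}_\mathcal{E}[-1]$.

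Next I would identify the unit $\eta : \mathrm{Id}_{D^b(Z)} \to RS$ with the defining composite of $\mathcal{L}_\mathcal{E}$. Unwinding
\begin{equation*}
RS(F) = \pi_{Z*}\bigl(\pi_X^*\pi_{X*}(\pi_Z^* F \otimes \mathcal{E}) \otimes \mathcal{E}^\vee \otimes \pi_X^!\mathcal{O}_X\bigr)
\end{equation*}
and tracking the four adjunction units and counits entering the construction of $\mathcal{L}_\mathcal{E}$, one checks that $\eta$ is (functorially in $F$) the morphism whose cone is $- \otimes \mathcal{L}_\mathcal{E}$. Thus $C_S \cong - \otimes \mathcal{L}_\mathcal{E}[-1]$, which is the desired cotwist formula.

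It remains to show that $C_S$ is an autoequivalence, equivalently that $\mathcal{L}_\mathcal{E}[\dim X - \dim Z]$ is a line bundle. For this I would base-change the defining triangle of $\mathcal{L}_\mathcal{E}$ along each closed point $p \in Z$, invoke the Gorenstein hypothesis to interpret the $\pi_X^!\mathcal{O}_X$-contribution as a degree shift, and use the orthogonality condition (3) of Definition \ref{dfn5.8} to kill all off-diagonal terms so that the fiber $\iota_p^* \mathcal{L}_\mathcal{E}$ is governed by $\mathrm{Hom}_{D^b(X)}^*(\pi_{X*}\mathcal{E}, \mathcal{E}_p)$. Condition (1) identifies this as $\mathbb{C}[h]/(h^2)$ with $\deg h = \dim X - \dim Z$, and a direct cofiber computation shows that each fiber of $\mathcal{L}_\mathcal{E}[\dim X - \dim Z]$ is a one-dimensional vector space in degree zero. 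A standard perfectness/fiberwise-rank argument upgrades this to the statement that $\mathcal{L}_\mathcal{E}[\dim X - \dim Z]$ is an invertible sheaf on $Z$. Given this, Definition \ref{dfn2.6} (2) holds, and Definition \ref{dfn2.6} (4) follows from the isomorphism $R \cong L(-) \otimes \mathcal{L}_\mathcal{E}$ established at the outset; the two-out-of-four theorem \cite[Theorem 5.1]{AL2} then supplies conditions (1) and (3), completing the proof that $\Phi_\mathcal{E}^{Z \to X}$ is a spherical functor.

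The main obstacle is the pointwise analysis of $\mathcal{L}_\mathcal{E}$ and the identification of $\eta$ with the defining composite: both steps require chasing several adjunction morphisms across three different schemes while carefully tracking the shifts coming from the relative dualizing complexes, and then exploiting the orthogonality hypothesis to eliminate the cross terms so that only the $\mathbb{C}[h]/(h^2)$-contribution of condition (1) survives. Once these diagram chases are pinned down, the rest of the argument is essentially formal.
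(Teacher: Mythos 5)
First, a point of comparison: the paper itself gives no proof of this statement --- Theorem \ref{thm5.10} is quoted from Anno--Logvinenko \cite[Theorem 3.2 and Proposition 3.7]{AL1} --- so your proposal can only be judged against their argument, not against anything internal to this paper.

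As a reconstruction of that argument, your outline begins correctly (the adjoint kernels $\mathcal{E}^\vee\otimes\pi_X^!\mathcal{O}_X$ and $\mathcal{E}^\vee\otimes\pi_Z^!\mathcal{O}_Z$, condition (2) of Definition \ref{dfn5.8} giving $R(-)\cong L(-)\otimes\mathcal{L}_\mathcal{E}$, and the fiberwise identification of $\mathcal{L}_\mathcal{E}[\dim X-\dim Z]$ as a line bundle), but it has a genuine gap at its central step. The assertion that the unit $\mathrm{Id}\to RS$ has cone $-\otimes\mathcal{L}_\mathcal{E}$ \emph{functorially in} $F$, verified by ``tracking the adjunction units and counits,'' is precisely the hard part, and it is false without the orthogonality hypothesis: $RS(F)\cong\pi_{Z*}\bigl(\pi_X^*\pi_{X*}(\pi_Z^*F\otimes\mathcal{E})\otimes\mathcal{E}^\vee\otimes\pi_X^!\mathcal{O}_X\bigr)$ mixes contributions from distinct points of $Z$, so the cotwist is a Fourier--Mukai transform $D^b(Z)\to D^b(Z)$ whose kernel on $Z\times Z$ has, a priori, off-diagonal support; for a general (non-orthogonal) spherical kernel the cotwist is \emph{not} tensoring by a shifted line bundle. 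Condition (3) of Definition \ref{dfn5.8} is exactly what kills the off-diagonal part of this kernel, whose fibers at $(p,q)$ with $p\neq q$ are controlled by groups of the form $\mathrm{Hom}^*_{D^b(X)}(\mathcal{E}_p,\mathcal{E}_q)$. You instead invoke (3) inside the fiberwise analysis of $\mathcal{L}_\mathcal{E}$ itself, which is a complex on $Z$ where no such cross terms occur and condition (1) alone identifies the fiber of the cone as one-dimensional in the expected degree; so orthogonality never enters your argument at the point where it is indispensable. A secondary but real issue: the two-out-of-four theorem of \cite{AL2} requires the \emph{canonical} natural transformations (e.g.\ the specific map $R\to C_SL[1]$) to be isomorphisms; producing some abstract isomorphism $R\cong L(-)\otimes\mathcal{L}_\mathcal{E}$ from condition (2) does not by itself verify Definition \ref{dfn2.6} (4), and in the dg/Fourier--Mukai setting this compatibility must be checked at the level of kernels.
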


For an orthogonally spherical object $\mathcal{E} \in \mathrm{Ob}(D^b(Z \times X))$, we denote the twist functor along the spherical functor $\Phi_\mathcal{E}^{Z \to X}$ by $T^\mathbb{O}_\mathcal{E} : D^b(X) \to D^b(X)$.

We can apply Theorems \ref{thm1.6} and \ref{thm1.7} to the following situation.

\begin{prop}\label{prop5.11}
Let $X,Z$ be smooth projective varieties over $\mathbb{C}$ and $\mathcal{E} \in \mathrm{Ob}(D^b(Z \times X))$ be an orthogonally spherical object.
Then
\begin{equation*}
(1-d)t \leq h_t(T^\mathbb{O}_\mathcal{E}) \leq
\begin{cases}
0 & (\text{if } t \geq 0),\\
(1-d)t & (\text{if } t \leq 0)
\end{cases}
\end{equation*}
where $d = \dim X - \dim Z > 0$.
If moreover $\bigcap_{p \in Z} \mathcal{E}_p^\perp \neq 0$ then $h_t(T^\mathbb{O}_\mathcal{E}) = 0$ for all $t \geq 0$.
\end{prop}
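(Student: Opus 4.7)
The plan is to apply Theorems \ref{thm1.6} and \ref{thm1.7} to the spherical functor produced by Theorem \ref{thm5.10}. That theorem provides a spherical functor $S = \Phi_\mathcal{E}^{Z \to X} : D^b(Z) \to D^b(X)$ with $T_S \cong T^\mathbb{O}_\mathcal{E}$ and $C_S \cong -\otimes \mathcal{L}_\mathcal{E}[-1]$, and tells us that $\mathcal{L}_\mathcal{E}[d]$ is isomorphic to an invertible sheaf $M$ on $Z$, whence $C_S[2] \cong -\otimes M[1-d]$. The technical hypothesis of both theorems---that the essential image of the right adjoint $R$ of $S$ contains a split-generator of $D^b(Z)$---is to be verified by appealing to Corollary \ref{cor5.13}, in direct parallel with the spherical and $\mathbb{P}$-object cases treated above.

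Next I would compute $h_t(C_S[2]) = (1-d)t$. Iterating gives $(C_S[2])^n \cong -\otimes M^n[n(1-d)]$, and for any split-generator $G$ of $D^b(Z)$ the shift pulls out of the complexity as $\delta_t(G,(C_S[2])^nG) = e^{n(1-d)t}\,\delta_t(G, G \otimes M^n)$, so $h_t(C_S[2]) = (1-d)t + h_t(-\otimes M)$. Since $-\otimes M$ is an autoequivalence of the bounded derived category of a smooth projective variety, $h_t(-\otimes M) = 0$ by a polynomial growth estimate on $\delta_t(G, G \otimes M^n)$; one can for instance take $G$ to be a tilting bundle such as $\bigoplus_{i=0}^N \mathcal{O}_Z(i)$ and resolve each $G \otimes M^n$ in terms of $G$ using a Castelnuovo--Mumford regularity argument. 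Substituting $h_t(C_S[2]) = (1-d)t$ into Theorems \ref{thm1.6} and \ref{thm1.7}(1) then yields the claimed two-sided bounds.

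For the vanishing under the hypothesis $\bigcap_{p \in Z} \mathcal{E}_p^\perp \neq 0$, I would produce a nonzero object of $\mathrm{Ker}\,SR$ and invoke Theorem \ref{thm1.7}(2). Taking any $0 \neq F$ in the intersection, and using that $S\mathcal{O}_p \cong \Phi_\mathcal{E}^{Z \to X}(\mathcal{O}_p) \cong \mathcal{E}_p$, adjunction yields $\mathrm{Hom}^*_{D^b(Z)}(\mathcal{O}_p, RF) \cong \mathrm{Hom}^*_{D^b(X)}(\mathcal{E}_p, F) = 0$ for every closed point $p \in Z$. Since the skyscrapers $\{\mathcal{O}_p\}_{p \in Z}$ form a spanning class of $D^b(Z)$ (using that $Z$ is smooth projective), this forces $RF = 0$ and hence $F \in \mathrm{Ker}\,SR$. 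Theorem \ref{thm1.7}(2) then delivers $h_t(T^\mathbb{O}_\mathcal{E}) \geq 0$, which combined with the upper bound $\leq 0$ for $t \geq 0$ already obtained yields the equality $h_t(T^\mathbb{O}_\mathcal{E}) = 0$ for $t \geq 0$.

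The step I expect to be the main obstacle is the rigorous verification of $h_t(-\otimes M) = 0$ at every $t \in \mathbb{R}$, rather than only at $t = 0$: one has to control simultaneously the number of summands and the range of shifts appearing in a cone decomposition of $G \otimes M^n$ into copies of $G[k]$, and guarantee that both quantities grow at most polynomially in $n$. Once that estimate is in place, the remaining steps are formal consequences of the earlier results of the paper.
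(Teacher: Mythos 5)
Your overall architecture coincides with the paper's: produce the spherical functor $S=\Phi_\mathcal{E}^{Z\to X}$ via Theorem \ref{thm5.10}, verify the density hypothesis on the image of $R$, compute $h_t(C_S[2])=(1-d)t$, and feed this into Theorems \ref{thm1.6} and \ref{thm1.7}. However, two steps are left genuinely open. First, you never verify the hypothesis of Corollary \ref{cor5.13}. Unlike the spherical and $\mathbb{P}$-object cases, where the density of the image of $R$ is clear from the shape of the source category, here one must actually exhibit a split-generator $\mathcal{G}$ and an $n>0$ with $\mathrm{Hom}_{D^b(Z)}(C_S^n\mathcal{G},\mathcal{G})=0$; the paper does this by taking $\mathcal{G}$ a coherent sheaf split-generating $D^b(Z)$ and using $C_S^n\cong -\otimes\mathcal{L}_\mathcal{E}'^{\otimes n}[-n(1+d)]$, so that $\mathrm{Hom}(C_S^n\mathcal{G},\mathcal{G})\cong\mathrm{Ext}^{n(1+d)}_Z(\mathcal{G}\otimes\mathcal{L}_\mathcal{E}'^{\otimes n},\mathcal{G})=0$ once $n(1+d)>\dim Z$. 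Saying ``in direct parallel with the earlier cases'' does not supply this check, and it is precisely the content of applying Corollary \ref{cor5.13} in this example. Second, the identity $h_t(-\otimes M)=0$ for all $t$ is exactly what the paper imports from \cite[Lemma 2.14]{DHKK}; your proposed replacement is problematic as written: $\bigoplus_{i=0}^N\mathcal{O}_Z(i)$ is a split-generator but not in general a tilting bundle, and since the Castelnuovo--Mumford regularity of $G\otimes M^n$ grows linearly in $n$, the twists appearing in the resolutions drift linearly with $n$, so the simultaneous polynomial control of multiplicities and shift range that your sketch requires is not established. Citing the DHKK lemma (as the paper does) closes this hole immediately.

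For the final assertion your route differs from the paper's and is correct: from $\mathrm{Hom}^*_{D^b(Z)}(\mathcal{O}_p,RF)\cong\mathrm{Hom}^*_{D^b(X)}(\mathcal{E}_p,F)=0$ for all closed points $p$ and the fact that skyscraper sheaves form a spanning class of $D^b(Z)$ for $Z$ smooth projective, you conclude $RF\cong 0$, hence $F\in\mathrm{Ker}\,SR$, and Theorem \ref{thm1.7} (2) applies. The paper instead proves $SR\mathcal{F}\cong 0$ by an explicit Fourier--Mukai kernel computation (projection formula, base change and $\pi_X^!$), showing that all derived fibers of $\pi_{Z*}\mathcal{H}om_{Z\times X}(\mathcal{E},\pi_X^!\mathcal{F})$ vanish and invoking \cite[Lemma 2.8]{AL1}. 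Your adjunction-plus-spanning-class argument is shorter and reaches the same conclusion, at the mild price of using Serre duality on $Z$ through the spanning-class property; so this part is a legitimate simplification, while the two gaps above are what you would need to fill to make the proposal complete.
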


To show Proposition \ref{prop5.11}, let us first prove a general lemma which helps to check the technical condition in Theorems \ref{thm1.6} and \ref{thm1.7}.

Let $S : \mathcal{C} \to \mathcal{D}$ be a spherical functor with right adjoint functor $R$.
Fix a split-generator $G \in \mathrm{Ob}(\mathcal{C})$ and set $X_1 = RSG \in \mathrm{Ob}(\mathcal{C})$.
It sits in the exact triangle
\begin{equation*}
G \overset{\eta_G}{\to} RSG \overset{\phi_1}{\to} C_SG[1] \to G[1]
\end{equation*}
defining the cotwist functor $C_S$.
Then we inductively define a sequence $\{X_n\}_{n=1}^\infty$ of objects of $\mathcal{C}$ to fit into the exact triangles
\begin{equation*}
\begin{tikzcd}
X_{n-1}[-1] \ar[r,"\phi_{n-1}{[-1]}"] \ar[d,equal] & C_S^{n-1}G \ar[d,"\eta_{C_S^{n-1}G}"] \ar[r] & G \ar[r] \ar[d] & X_{n-1} \ar[d,equal]\\
X_{n-1}[-1] \ar[r] & RSC_S^{n-1}G \ar[r] \ar[d] & X_n \ar[r] \ar[d,"\phi_n"] & X_{n-1}\\
& C_S^nG[1] \ar[r,equal] \ar[d] & C_S^nG[1] \ar[d] &\\
& C_S^{n-1}G[1] \ar[r] & G[1]. &
\end{tikzcd}
\end{equation*}

\begin{lem}
Let $S : \mathcal{C} \to \mathcal{D}$ be a spherical functor with right adjoint functor $R$ and $G$ be a split-generator of $\mathcal{C}$.
Assume that there is an integer $n > 0$ such that $\mathrm{Hom}_\mathcal{C}(C_S^nG,G) = 0$.
Then $RS(G \oplus C_SG \oplus \cdots \oplus C_S^{n-1}G)$ is a split-generator of $\mathcal{C}$.
\end{lem}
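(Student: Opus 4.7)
The plan is to exploit the octahedral diagram in the setup, extract from it a convenient exact triangle involving $X_n$, and then use the vanishing hypothesis $\mathrm{Hom}_\mathcal{C}(C_S^nG,G) = 0$ to split that triangle.

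First I would read off the rightmost column of the octahedral diagram defining the sequence $\{X_n\}$, which is the exact triangle
\begin{equation*}
G \to X_n \to C_S^nG[1] \to G[1].
\end{equation*}
The connecting morphism $C_S^nG[1] \to G[1]$ is an element of $\mathrm{Hom}_\mathcal{C}(C_S^nG[1],G[1]) \cong \mathrm{Hom}_\mathcal{C}(C_S^nG,G)$, which vanishes by hypothesis. Hence the triangle splits, and we obtain a direct sum decomposition
\begin{equation*}
X_n \cong G \oplus C_S^nG[1].
\end{equation*}
In particular, $G$ is a direct summand of $X_n$.

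Next I would show by induction on $n$ that $X_n$ lies in the triangulated subcategory of $\mathcal{C}$ generated by $RSG, RSC_SG, \dots, RSC_S^{n-1}G$. The base case $n=1$ is clear since $X_1 = RSG$. For the inductive step, the middle row of the defining octahedral diagram is the exact triangle
\begin{equation*}
X_{n-1}[-1] \to RSC_S^{n-1}G \to X_n \to X_{n-1},
\end{equation*}
so $X_n$ is an extension of $X_{n-1}$ by $RSC_S^{n-1}G$, and by the inductive hypothesis $X_{n-1}$ lies in the triangulated subcategory generated by $RSG, \dots, RSC_S^{n-2}G$.

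Combining the two steps, $G$ is a direct summand of an object lying in the triangulated subcategory generated by $RS(G \oplus C_SG \oplus \cdots \oplus C_S^{n-1}G)$, hence $G$ belongs to the smallest full triangulated subcategory of $\mathcal{C}$ that contains $RS(G \oplus C_SG \oplus \cdots \oplus C_S^{n-1}G)$ and is closed under direct summands. Since $G$ was assumed to be a split-generator of $\mathcal{C}$, the same is true of $RS(G \oplus C_SG \oplus \cdots \oplus C_S^{n-1}G)$, as required.

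This argument is short once the octahedral diagram is in hand; the only real work is in correctly identifying the rightmost column as the triangle $G \to X_n \to C_S^nG[1] \to G[1]$ and recognizing that its connecting morphism is exactly controlled by $\mathrm{Hom}_\mathcal{C}(C_S^nG,G)$. That identification, rather than any subsequent step, is the single point where a careful reading of the construction of $X_n$ is required.
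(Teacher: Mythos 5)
Your proposal is correct and follows essentially the same route as the paper: split the triangle $G \to X_n \to C_S^nG[1] \to G[1]$ using $\mathrm{Hom}_\mathcal{C}(C_S^nG,G)=0$ to see that $G$ is a direct summand of $X_n$, and note that $X_n$ is built by cones from $RSG,\dots,RSC_S^{n-1}G$. The only difference is that you spell out the induction on the middle row of the octahedral diagram, which the paper compresses into the phrase \textquote{by construction}.
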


\begin{proof}
The assumption implies that the exact triangle
\begin{equation*}
G \to X_n \to C_S^nG[1] \to G[1]
\end{equation*}
splits and thus $X_n \cong G \oplus C_S^nG[1]$.
On the other hand, $X_n$ can be split-generated by $RS(G \oplus C_SG \oplus \cdots \oplus C_S^{n-1}G)$ by construction.
\end{proof}

\begin{cor}\label{cor5.13}
Let $S : \mathcal{C} \to \mathcal{D}$ be a spherical functor with right adjoint functor $R$.
If there exist a split-generator $G$ of $\mathcal{C}$ and an integer $n > 0$ such that $\mathrm{Hom}_\mathcal{C}(C_S^nG,G) = 0$, then the essential image of $RS$ (in particular, that of $R$) contains a split-generator of $\mathcal{C}$.
\end{cor}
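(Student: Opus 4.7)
The plan is to derive this corollary directly from Lemma 5.12, which does essentially all the work. The lemma exhibits a concrete split-generator of $\mathcal{C}$, and the corollary is then just a matter of observing where this object lives.

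More precisely, assuming a split-generator $G$ of $\mathcal{C}$ and an integer $n > 0$ with $\mathrm{Hom}_\mathcal{C}(C_S^nG,G) = 0$ are given, the first step is to invoke Lemma 5.12 with these data to produce the object
\begin{equation*}
X = RS(G \oplus C_SG \oplus \cdots \oplus C_S^{n-1}G),
\end{equation*}
which the lemma certifies is a split-generator of $\mathcal{C}$. The second step is to note that $X$ is manifestly of the form $RS(Y)$ where $Y = G \oplus C_SG \oplus \cdots \oplus C_S^{n-1}G \in \mathrm{Ob}(\mathcal{C})$, so $X$ lies in the essential image of the composite functor $RS$. The third step is to observe that $X = R(SY)$ exhibits $X$ as being in the essential image of $R$ as well.

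Since a split-generator has been produced that lies in both essential images, both statements of the corollary follow. There is no genuine obstacle here — the technical content was already packaged into Lemma 5.12, and the corollary is a one-line consequence of unpacking what \emph{essential image} means for a composite of functors. The only small bookkeeping worth spelling out is that being in the essential image of $RS$ automatically implies being in the essential image of $R$, which makes the parenthetical \emph{``in particular, that of $R$''} immediate.
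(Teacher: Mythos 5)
Your proposal is correct and matches the paper's (implicit) argument exactly: the corollary is stated as an immediate consequence of the preceding lemma, which exhibits $RS(G \oplus C_SG \oplus \cdots \oplus C_S^{n-1}G)$ as a split-generator, and this object visibly lies in the essential image of $RS$, hence of $R$. Nothing further is needed.
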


\begin{proof}[Proof of Proposition \ref{prop5.11}]
Let $S = \Phi_\mathcal{E}^{Z \to X} : D^b(Z) \to D^b(X)$ be the Fourier--Mukai transform with kernel $\mathcal{E}$ and $R$ be its right adjoint functor.
Let $\mathcal{G} \in \mathrm{Ob}(\mathrm{Coh}(Z))$ be a split-generator of $D^b(Z)$.
By Theorem \ref{thm5.10}, there is an invertible sheaf $\mathcal{L}'_\mathcal{E}$ on $Z$ which is isomorhpic to $\mathcal{L}_\mathcal{E}[d]$.
Then we have
\begin{align*}
\mathrm{Hom}_{D^b(Z)}(C_S^n\mathcal{G},\mathcal{G})
&\cong \mathrm{Hom}_{D^b(Z)}(\mathcal{G} \otimes \mathcal{L}_\mathcal{E}^{\otimes n}[-n],\mathcal{G})\\
&\cong \mathrm{Hom}_{D^b(Z)}(\mathcal{G} \otimes \mathcal{L}_\mathcal{E}'^{\otimes n}[-n(1+d)],\mathcal{G})\\
&\cong \mathrm{Hom}_{D^b(Z)}(\mathcal{G} \otimes \mathcal{L}_\mathcal{E}'^{\otimes n},\mathcal{G}[n(1+d)])\\
&\cong \mathrm{Ext}_Z^{n(1+d)}(\mathcal{G} \otimes \mathcal{L}_\mathcal{E}'^{\otimes n},\mathcal{G})\\
&= 0
\end{align*}
for every $n > 0$ satisfying $n(1+d) > \dim Z$ (e.g. take any $n \geq \dim Z$).
By Corollary \ref{cor5.13}, this implies that the essential image of $R$ contains a split-generator of $\mathcal{C}$.
The former assertion then follows from Theorems \ref{thm1.6} and \ref{thm1.7} (1) as
\begin{equation*}
h_t(C_S[2]) = h_t(- \otimes \mathcal{L}_\mathcal{E}[1]) = h_t(- \otimes \mathcal{L}'_\mathcal{E}[1-d]) = (1-d)t
\end{equation*}
by \cite[Lemma 2.14]{DHKK}.

Let us next prove the latter assertion.
More precisely, we shall show that
\begin{equation*}
\bigcap_{p \in Z} \mathcal{E}_p^\perp \subset \mathrm{Ker}\, SR
\end{equation*}
then the assertion follows from Theorem \ref{thm1.7} (2).
Consider the following set of projections
\begin{equation*}
\begin{tikzcd}
& Z &\\
X \times Z \ar[d,"\pi_X",swap] \ar[ur,"\pi_Z"] & & Z \times X \ar[d,"\pi_X"] \ar[ul,"\pi_Z",swap]\\
X & X \times Z \times X \ar[ul,"\pi_{12}",swap] \ar[ur,"\pi_{23}"] \ar[d,"\pi_{13}"] & X\\
& X \times X. \ar[ul,"\pi_1"] \ar[ur,"\pi_2",swap] &
\end{tikzcd}
\end{equation*}
The functor $SR : D^b(X) \to D^b(X)$ is isomorphic to the Fourier--Mukai transform with kernel $\pi_{13*}(\pi_{12}^*(\mathcal{E}^\vee \otimes \pi_X^!\mathcal{O}_X) \otimes \pi_{23}^*\mathcal{E}) \in \mathrm{Ob}(D^b(X \times X))$.
Therefore, for an object $\mathcal{F} \in \mathrm{Ob}(D^b(X))$, we have
\begin{align*}
SR\mathcal{F}
&\cong \pi_{2*}(\pi_{13*}(\pi_{12}^*(\mathcal{E}^\vee \otimes \pi_X^!\mathcal{O}_X) \otimes \pi_{23}^*\mathcal{E}) \otimes \pi_1^*\mathcal{F})\\
&\cong \pi_{2*}\pi_{13*}(\pi_{12}^*(\mathcal{E}^\vee \otimes \pi_X^!\mathcal{O}_X) \otimes \pi_{23}^*\mathcal{E} \otimes \pi_{13}^*\pi_1^*\mathcal{F})\\
&\cong \pi_{X*}\pi_{23*}(\pi_{12}^*(\mathcal{E}^\vee \otimes \pi_X^!\mathcal{O}_X) \otimes \pi_{23}^*\mathcal{E} \otimes \pi_{12}^*\pi_X^*\mathcal{F})\\
&\cong \pi_{X*}\pi_{23*}(\pi_{23}^*\mathcal{E} \otimes \pi_{12}^*(\mathcal{E}^\vee \otimes \pi_X^!\mathcal{O}_X \otimes \pi_X^*\mathcal{F}))\\
&\cong \pi_{X*}(\mathcal{E} \otimes \pi_{23*}\pi_{12}^*(\mathcal{E}^\vee \otimes \pi_X^!\mathcal{O}_X \otimes \pi_X^*\mathcal{F}))\\
&\cong \pi_{X*}(\mathcal{E} \otimes \pi_{23*}\pi_{12}^*(\mathcal{E}^\vee \otimes \pi_X^!\mathcal{F}))\\
&\cong \pi_{X*}(\mathcal{E} \otimes \pi_Z^*\pi_{Z*}(\mathcal{E}^\vee \otimes \pi_X^!\mathcal{F}))\\
&\cong \pi_{X*}(\mathcal{E} \otimes \pi_Z^*\pi_{Z*}\mathcal{H}om_{Z \times X}(\mathcal{E},\pi_X^!\mathcal{F}))
\end{align*}
where the second and fifth isomorphisms follow from the projection formula and the seventh isomorphism follows from the base change theorem.
Now suppose $\mathcal{F} \in \mathcal{E}_p^\perp$ for every closed point $p \in Z$.
Let us show that
\begin{equation*}
\pi_{Z*}\mathcal{H}om_{Z \times X}(\mathcal{E},\pi_X^!\mathcal{F}) \cong 0.
\end{equation*}
Consider the following set of morphisms
\begin{equation*}
\begin{tikzcd}
X \ar[r,"\iota_{p \times X}"] \ar[d,"\pi_\mathbb{C}",swap] & Z \times X \ar[d,"\pi_Z"]\\
\mathrm{Spec}\,\mathbb{C} \ar[r,"\iota_p",swap] & Z
\end{tikzcd}
\end{equation*}
where $\iota_{p \times X}$ and $\iota_p$ are the inclusions corresponding to a closed point $p \in Z$.
Denote by $S_X = - \otimes \omega_X[\dim X] : D^b(X) \to D^b(X)$ and $S_{Z \times X} = - \otimes \pi_X^*\omega_X \otimes \pi_Z^*\omega_Z[\dim X + \dim Z] : D^b(Z \times X) \to D^b(Z \times X)$ the Serre functors.
Since both $X$ and $Z$ are smooth projective varieties (and so is $Z \times X$), we see that
\begin{align*}
\pi_X^!\mathcal{F}
&\cong S_{Z \times X}\pi_X^*S_X^{-1}\mathcal{F}\\
&= S_{Z \times X}\pi_X^*(\mathcal{F} \otimes \omega_X^\vee[-\dim X])\\
&\cong S_{Z \times X}(\pi_X^*\mathcal{F} \otimes \pi_X^*\omega_X^\vee[-\dim X])\\
&= \pi_X^*\mathcal{F} \otimes \pi_X^*\omega_X^\vee \otimes \pi_X^*\omega_X \otimes \pi_Z^*\omega_Z[\dim Z]\\
&\cong \pi_X^*\mathcal{F} \otimes \pi_Z^*\omega_Z[\dim Z]
\end{align*}
and thus
\begin{equation*}
\iota_{p \times X}^*\pi_X^!\mathcal{F} \cong \iota_{p \times X}^*\pi_X^*\mathcal{F} \otimes \iota_{p \times X}^*\pi_Z^*\omega_Z[\dim Z] \cong \mathcal{F}[\dim Z].
\end{equation*}
Consequently, we have
\begin{align*}
\iota_p^*\pi_{Z*}\mathcal{H}om_{Z \times X}(\mathcal{E},\pi_X^!\mathcal{F})
&\cong \pi_{\mathbb{C}*}\iota_{p \times X}^*\mathcal{H}om_{Z \times X}(\mathcal{E},\pi_X^!\mathcal{F})\\
&\cong \pi_{\mathbb{C}*}\mathcal{H}om_X(\iota_{p \times X}^*\mathcal{E},\iota_{p \times X}^*\pi_X^!\mathcal{F})\\
&\cong \mathrm{Hom}_{D^b(X)}^*(\mathcal{E}_p,\mathcal{F}[\dim Z])\\
&= 0
\end{align*}
where the first isomorphism follows from the base change theorem and the last equality follows from the assumption that $\mathcal{F} \in \mathcal{E}_p^\perp = \{\mathcal{F} \in \mathrm{Ob}(D^b(X)) \,|\, \mathrm{Hom}_{D^b(X)}^*(\mathcal{E}_p,\mathcal{F}) = 0\}$.
As this holds for every closed point $p \in Z$, we conclude that $\pi_{Z*}\mathcal{H}om_{Z \times X}(\mathcal{E},\pi_X^!\mathcal{F}) \cong 0$ (see \cite[Lemma 2.8]{AL1}).
\end{proof}

We can also prove the Gromov--Yomdin type formula for the twist along an orthogonally spherical object using Corollary \ref{cor1.10}.

\begin{prop}
Let $X,Z$ be smooth projective varieties over $\mathbb{C}$ and $\mathcal{E} \in \mathrm{Ob}(D^b(Z \times X))$ be an orthogonally spherical object.
Assume that there is a closed point $p \in Z$ such that $[\mathcal{E}_p] \neq 0$ in $\mathcal{N}(D^b(X))$.\footnote{
For instance, this condition holds if $\dim X - \dim Z$ is even by Definition \ref{dfn5.8} (1).
}
Then
\begin{equation*}
h_0(T^\mathbb{O}_\mathcal{E}) = \log\rho([T^\mathbb{O}_\mathcal{E}]) = 0.
\end{equation*}
\end{prop}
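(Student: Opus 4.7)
The strategy is to apply Corollary \ref{cor1.10} to the spherical functor $S = \Phi_\mathcal{E}^{Z \to X} : D^b(Z) \to D^b(X)$, and then combine the resulting Gromov--Yomdin type equality $h_0(T^\mathbb{O}_\mathcal{E}) = \log\rho([T^\mathbb{O}_\mathcal{E}])$ with the vanishing $h_0(T^\mathbb{O}_\mathcal{E}) = 0$ forced by Proposition \ref{prop5.11} at $t = 0$. Since $X$ is smooth projective, $D^b(X)$ is equivalent to the perfect derived category of a smooth proper dg algebra, so Corollary \ref{cor1.10} is available. The essential image condition on the right adjoint $R$ has already been verified in the proof of Proposition \ref{prop5.11}, so what remains is to check the Gromov inequality $h_0(C_S) \leq \log\rho([C_S])$ and to exhibit a suitable eigenvector $v \in \mathcal{N}(D^b(Z)) \otimes_\mathbb{Z} \mathbb{C}$.

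By Theorem \ref{thm5.10}, $C_S \cong - \otimes \mathcal{L}_\mathcal{E}[-1]$ with $\mathcal{L}_\mathcal{E}[d]$ isomorphic to an invertible sheaf $\mathcal{L}'_\mathcal{E}$ on $Z$, so $C_S \cong - \otimes \mathcal{L}'_\mathcal{E}[-1-d]$. Then $h_0(C_S) = 0$ by \cite[Lemma 2.14]{DHKK}, exactly as in the proof of Proposition \ref{prop5.11}. To see that $\log\rho([C_S]) = 0$, I would note that the Chern character realizes $\mathcal{N}(D^b(Z)) \otimes_\mathbb{Z} \mathbb{Q}$ inside $A^*(Z) \otimes_\mathbb{Z} \mathbb{Q}$, on which $- \otimes \mathcal{L}'_\mathcal{E}$ acts by multiplication by $\mathrm{ch}(\mathcal{L}'_\mathcal{E}) = \exp(c_1(\mathcal{L}'_\mathcal{E}))$; this is unipotent, so all its eigenvalues equal $1$. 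The shift $[-1-d]$ contributes only the sign $(-1)^{1+d}$, so every eigenvalue of $[C_S]$ has absolute value one, whence $\rho([C_S]) = 1$.

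For the eigenvector, take the closed point $p \in Z$ provided by the hypothesis and set $v = [\mathcal{O}_p]$. Because $\mathcal{O}_p \otimes \mathcal{L}'_\mathcal{E} \cong \mathcal{O}_p$, we have $C_S\mathcal{O}_p \cong \mathcal{O}_p[-1-d]$, so $[C_S]v = (-1)^{1+d}v$, and the eigenvalue has absolute value $\rho([C_S]) = 1$. Moreover, $S\mathcal{O}_p \cong \mathcal{E}_p$ by the standard computation of the Fourier--Mukai transform on a skyscraper, so $[S]v = [\mathcal{E}_p] \neq 0$ by the assumption on $p$. All hypotheses of Corollary \ref{cor1.10} are therefore satisfied, giving $h_0(T^\mathbb{O}_\mathcal{E}) = \log\rho([T^\mathbb{O}_\mathcal{E}])$. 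Since Proposition \ref{prop5.11} at $t = 0$ sandwiches $0 \leq h_0(T^\mathbb{O}_\mathcal{E}) \leq 0$, both quantities vanish.

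I expect the only genuine content to lie in the verification that $\log\rho([C_S]) = 0$, since this is where the smoothness and projectivity of $Z$ are used in an essential way (through the unipotency of $\exp(c_1(\mathcal{L}'_\mathcal{E}))$ in $A^*(Z)_\mathbb{Q}$). The remaining steps — the identification of $S\mathcal{O}_p$ with $\mathcal{E}_p$, the behavior of $C_S$ on skyscrapers, and the invocation of Proposition \ref{prop5.11} — are direct applications of results already established in the paper.
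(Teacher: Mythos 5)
Your proposal is correct and follows essentially the same route as the paper: apply Corollary \ref{cor1.10} to $S = \Phi_\mathcal{E}^{Z \to X}$ with eigenvector $v = [\mathcal{O}_p]$, using $S\mathcal{O}_p \cong \mathcal{E}_p$ and $C_S\mathcal{O}_p \cong \mathcal{O}_p[-1-d]$, and obtain the vanishing from Proposition \ref{prop5.11} at $t=0$. The only difference is that you spell out why $\rho([C_S]) = 1$ (unipotency of multiplication by $\mathrm{ch}(\mathcal{L}'_\mathcal{E})$ under the Chern character embedding), a point the paper states without proof.
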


\begin{proof}
Let $S = \Phi_\mathcal{E}^{Z \to X} : D^b(Z) \to D^b(X)$ be the Fourier--Mukai transform with kernel $\mathcal{E}$.
Since $h_0(C_S) = \log\rho([C_S]) = 0$, the assertion follows from Corollary \ref{cor1.10} if we show that there exists an element $v \in \mathcal{N}(D^b(Z)) \otimes_\mathbb{Z} \mathbb{C}$ such that $[S]v \neq 0$ and $[C_S]v = \lambda v$ where $|\lambda| = \rho([C_S]) = 1$.

Take a closed point $p \in Z$ so that $[\mathcal{E}_p] \neq 0$ in $\mathcal{N}(D^b(X))$ and denote by $\mathcal{O}_p$ the skyscraper sheaf supported at $p$.
Then $S\mathcal{O}_p = \Phi_\mathcal{E}^{Z \to X}\mathcal{O}_p = \mathcal{E}_p$ and so $[S][\mathcal{O}_p] = [\mathcal{E}_p] \neq 0$.
On the other hand, choose an invertible sheaf $\mathcal{L}'_\mathcal{E}$ on $Z$ so that $\mathcal{L}_\mathcal{E}[d] \cong \mathcal{L}'_\mathcal{E}$ where $d = \dim X - \dim Z$.
Then $C_S\mathcal{O}_p \cong \mathcal{O}_p \otimes \mathcal{L}'_\mathcal{E}[-1-d] \cong \mathcal{O}_p[-1-d]$ and therefore $[C_S][\mathcal{O}_p] = \pm[\mathcal{O}_p]$.
This shows that we can take $v = [\mathcal{O}_p]$.
\end{proof}

\end{document}